\let\url=\undefined
\title{Quotients of the Multiplihedron as Categorified Associahedra}
\author {Stefan Forcey}
\thanks{Thanks to {\Xy-pic} for the diagrams. }
\email{sforcey@tnstate.edu}
\address{Department of Physics and Mathematics\\
Tennessee State University\\
Nashville, TN 37209 \\
USA\\
}
\keywords{enriched categories, n-categories,  monoidal categories, polytopes}
\swapnumbers \theoremstyle{plain}
\newtheorem{theorem}{Theorem}[section]
\newtheorem{lemma}[theorem]{Lemma}
\theoremstyle{definition}
\newtheorem{definition}[theorem]{Definition}
\newtheorem{example}[theorem]{Example}
\theoremstyle{remark}
\newtheorem{remark}[theorem]{Remark}
\def\cal#1{\mathcal{#1}}
\newcommand{\bcal}[1]{\mbox{\boldmath${\cal {#1}}$}}
\begin{document}

\begin{abstract}
We describe a new sequence of polytopes which characterize $A_{\infty}$ maps from a
topological monoid to an $A_{\infty}$ space.
Therefore each of these polytopes is a quotient
of the corresponding multiplihedron.
Our sequence of polytopes is demonstrated not to be
 combinatorially equivalent to the associahedra, as was previously assumed in both topological and categorical literature.
  They are given the new collective
 name composihedra.
  We point out how these polytopes are used to parameterize compositions in the
formulation of the theories of
 enriched bicategories and pseudomonoids in a monoidal bicategory.
 We also present a simple algorithm for determining the extremal points in Euclidean space whose convex
hull  is the $n^{th}$ polytope in the sequence of composihedra, that is, the $n^{th}$ composihedron $\cal{CK}(n)$.
$$
\xy 0;/r.25pc/:
   (-4,3)*=0{}="a1";
  (4,3)*=0{}="a2";
  (6,-2)*=0{}="a3";
  (0,-7)*=0{}="a4";
  (-6,-2)*=0{}="a5";
  (-6,6.5)*=0{}="b1";
  (6,6.5)*=0{}="b2";
  (10,-4)*=0{}="b3";
  (0,-12)*=0{}="b4";
  (-10,-4)*=0{}="b5";
  (-8,10.5)*=0{}="c1";
  (8,10.5)*=0{}="c2";
  (13,7.5)*=0{}="c3";
  (16,0)*=0{}="c4";
  (14,-6)*=0{}="c5";
  (4,-14)*=0{}="c6";
  (0,-16)*=0{}="c7";
  (-4,-14)*=0{}="c8";
  (-14,-6)*=0{}="c9";
  (-16,0)*=0{}="c10";
  (-13,7.5)*=0{}="c11";
  (-10.5,-11.75)*=0{}="k1";
   (10.5,-11.75)*=0{}="k2";
    (0,12.5)*=0{}="k3";
     (0,0)*=0{}="k4";
   (-16,3.5)*=0{}="ck1";
    (16,3.5)*=0{}="ck2";
     (0,-4)*=0{}="ck3";
 (-9,-1.4)*{\hole}="x"; (-9,-1.4)*{\hole}="x";
(-6.8,4.4)*{\hole}="y";
  (9,-1.4)*{\hole}="w";
  (0,-12)*{\hole}="v";
 (6.8,4.4)*{\hole}="z";
 (-12,-12)*=0{}="ar1";
 (-19,-16)*=0{}="ar2";
(12,-12)*=0{}="ar3";
 (19,-16)*=0{}="ar4";
"ar1";"ar2" \ar@{->>}; "ar1";"ar2"\ar@{->>};
"ar3";"ar4" \ar@{-};
  "a1";"a2" \ar@{-}; "a1";"a2"\ar@{-};
  "a2";"a3"  \ar@{-};
    "a3";"a4" \ar@{-};
  "a4";"a5" \ar@{-};
  "a5";"a1" \ar@{-};
 "b1";"b2" \ar@{-};
 "b2";"b3" \ar@{-};
 "b3";"b4" \ar@{-};
 "b4";"b5" \ar@{-};
 "b5";"b1" \ar@{-};
 "c1";"c2"\ar@{-};
  "c2";"c3"  \ar@{-};
    "c3";"c4" \ar@{-};
  "c4";"c5" \ar@{-};
"c5";"c6"\ar@{-};
  "c6";"c7"  \ar@{-};
    "c7";"c8" \ar@{-};
  "c8";"c9" \ar@{-};
"c9";"c10"\ar@{-};
  "c10";"c11"  \ar@{-};
  "c11";"c1"  \ar@{-};
"b1";"c1"\ar@{-}; "b2";"c2"\ar@{-}; "b3";"c5"\ar@{-};
  "b4";"c6"  \ar@{-};
  "b4";"c8"  \ar@{-};
  "b5";"c9"  \ar@{-};
"a1";"y"  \ar@{-}; "y";"c11"  \ar@{-}; "a2";"z"  \ar@{-}; "z";"c3"  \ar@{-}; "a3";"w"  \ar@{-};
"w";"c4"  \ar@{-}; "a4";"v"  \ar@{-}; "v";"c7"  \ar@{-}; "a5";"x"  \ar@{-}; "x";"c10"  \ar@{-};
     \endxy
$$
\vspace{-.5in}
$$
\xy 0;/r.25pc/:
   (-4,3)*=0{}="a1";
  (4,3)*=0{}="a2";
  (6,-2)*=0{}="a3";
  (0,-7)*=0{}="a4";
  (-6,-2)*=0{}="a5";
  (-6,5.5)*=0{}="b1";
  (6,5.5)*=0{}="b2";
  (10,-4)*=0{}="b3";
  (0,-12)*=0{}="b4";
  (-10,-4)*=0{}="b5";
  (-8,10.5)*=0{}="c1";
  (8,10.5)*=0{}="c2";
  (13,7.5)*=0{}="c3";
  (16,0)*=0{}="c4";
  (14,-6)*=0{}="c5";
  (4,-14)*=0{}="c6";
  (0,-16)*=0{}="c7";
  (-4,-14)*=0{}="c8";
  (-14,-6)*=0{}="c9";
  (-16,0)*=0{}="c10";
  (-13,7.5)*=0{}="c11";
  (-13,-10)*=0{}="k1";
   (13,-10)*=0{}="k2";
    (0,12.5)*=0{}="k3";
     (0,-2)*=0{}="k4";
   (-14,3.25)*=0{}="ck1";
    (14,3.25)*=0{}="ck2";
     (0,-4)*=0{}="ck3";
     (-8.209,.251)*{\hole}="m";
  (8.209,.251)*{\hole}="n";
 (-9,-1.4)*{\hole}="x"; (-9,-1.4)*{\hole}="x";
(-6.8,4.4)*{\hole}="y";
  (9,-1.4)*{\hole}="w";
  (0,-12)*{\hole}="v";
 (6.8,4.4)*{\hole}="z";
 (-12,-12)*=0{}="ar1";
 (-19,-16)*=0{}="ar2";
(12,-12)*=0{}="ar3";
 (19,-16)*=0{}="ar4";
"ar3";"ar4"\ar@{->>};
"ar3";"ar4" \ar@{-};
 "b1";"b2" \ar@{-};"b1";"b2" \ar@{-};
 "b2";"b3" \ar@{-};
 "b3";"b4" \ar@{-};
 "b4";"b5" \ar@{-};
 "b5";"b1" \ar@{-};
 "c1";"c2"\ar@{-};
  "c2";"ck2"  \ar@{-};
  "ck2";"c5" \ar@{-};
"c5";"c6"\ar@{-};
  "c6";"c7"  \ar@{-};
    "c7";"c8" \ar@{-};
  "c8";"c9" \ar@{-};
"c9";"ck1"\ar@{-};
  "ck1";"c1"  \ar@{-};
"b1";"c1"\ar@{-}; "b2";"c2"\ar@{-}; "b3";"c5"\ar@{-};
  "b4";"c6"  \ar@{-};
  "b4";"c8"  \ar@{-};
  "b5";"c9"  \ar@{-};
"ck3";"v"  \ar@{-}; "v";"c7"  \ar@{-};
 "ck1";"m"  \ar@{-};
"m";"ck3"  \ar@{-};
  "ck2";"n"  \ar@{-};
"n";"ck3"  \ar@{-};
   "ck1";"c1"  \ar@{-};
     \endxy
~\hspace{2in}
\xy 0;/r.25pc/:
   (-4,3)*=0{}="a1";
  (4,3)*=0{}="a2";
  (6,-2)*=0{}="a3";
  (0,-7)*=0{}="a4";
  (-6,-2)*=0{}="a5";
  (-6,6.5)*=0{}="b1";
  (6,6.5)*=0{}="b2";
  (10,-4)*=0{}="b3";
  (0,-12)*=0{}="b4";
  (-10,-4)*=0{}="b5";
  (-8,10.5)*=0{}="c1";
  (8,10.5)*=0{}="c2";
  (13,7.5)*=0{}="c3";
  (16,0)*=0{}="c4";
  (14,-6)*=0{}="c5";
  (4,-14)*=0{}="c6";
  (0,-16)*=0{}="c7";
  (-4,-14)*=0{}="c8";
  (-14,-6)*=0{}="c9";
  (-16,0)*=0{}="c10";
  (-13,7.5)*=0{}="c11";
  (-13,-10)*=0{}="k1";
   (13,-10)*=0{}="k2";
    (0,12.5)*=0{}="k3";
     (0,-2)*=0{}="k4";
     (-14,3.25)*=0{}="ck1";
    (14,3.25)*=0{}="ck2";
     (0,-4)*=0{}="ck3";
(-3.451,-4.124)*{\hole}="p";
(3.451,-4.124)*{\hole}="q";
(0,3)*{\hole}="o";
 (-9,-1.4)*{\hole}="x"; (-9,-1.4)*{\hole}="x";
(-6.8,4.4)*{\hole}="y";
  (9,-1.4)*{\hole}="w";
  (0,-12)*{\hole}="v";
 (6.8,4.4)*{\hole}="z";
 (-12,-12)*=0{}="ar1";
 (-19,-16)*=0{}="ar2";
(12,-12)*=0{}="ar3";
 (19,-16)*=0{}="ar4";
"ar1";"ar2" \ar@{->>}; "ar1";"ar2"\ar@{-};
  "a1";"o" \ar@{-};"a1";"o" \ar@{-};
"o";"a2" \ar@{-};
  "a2";"a3"  \ar@{-};
    "a3";"q" \ar@{-};
"q";"a4" \ar@{-};
  "a4";"p" \ar@{-};
"p";"a5" \ar@{-};
  "a5";"a1" \ar@{-};
 "c11";"k3"\ar@{-};
  "k3";"c3"  \ar@{-};
    "c3";"c4" \ar@{-};
  "c4";"k2" \ar@{-};
  "k2";"c7"  \ar@{-};
    "c7";"k1" \ar@{-};
"k1";"c10"\ar@{-};
  "c10";"c11"  \ar@{-};
"a1";"c11"  \ar@{-};
"a2";"c3"  \ar@{-}; 
"a3";"c4"  \ar@{-}; 
 "a4";"c7"  \ar@{-};
  "a5";"c10"  \ar@{-}; 
"k1";"k4" \ar@{-};
"k2";"k4" \ar@{-};
"k3";"k4" \ar@{-};
     \endxy
$$
\vspace{-.5in}
$$
\xy 0;/r.25pc/:
   (-4,3)*=0{}="a1";
  (4,3)*=0{}="a2";
  (6,-2)*=0{}="a3";
  (0,-7)*=0{}="a4";
  (-6,-2)*=0{}="a5";
  (-6,6.5)*=0{}="b1";
  (6,6.5)*=0{}="b2";
  (10,-4)*=0{}="b3";
  (0,-12)*=0{}="b4";
  (-10,-4)*=0{}="b5";
  (-8,10.5)*=0{}="c1";
  (8,10.5)*=0{}="c2";
  (13,7.5)*=0{}="c3";
  (16,0)*=0{}="c4";
  (14,-6)*=0{}="c5";
  (4,-14)*=0{}="c6";
  (0,-17.25)*=0{}="c7";
  (-4,-14)*=0{}="c8";
  (-14,-6)*=0{}="c9";
  (-16,0)*=0{}="c10";
  (-13,7.5)*=0{}="c11";
  (-13.5,-10)*=0{}="k1";
   (13.5,-10)*=0{}="k2";
    (0,10)*=0{}="k3";
     (0,-2)*=0{}="k4";
     (-13.5,2)*=0{}="ck1";
    (13.5,2)*=0{}="ck2";
     (0,-5.25)*=0{}="ck3";
(-2.877,-3.705)*{\hole}="f";
(2.877,-3.705)*{\hole}="g";
 (-9,-1.4)*{\hole}="x"; (-9,-1.4)*{\hole}="x";
(-6.8,4.4)*{\hole}="y";
  (9,-1.4)*{\hole}="w";
  (0,-12)*{\hole}="v";
 (6.8,4.4)*{\hole}="z";
"ck1";"f" \ar@{-};"ck1";"f" \ar@{-};
"f";"ck3"  \ar@{-};
  "ck2";"g" \ar@{-};
"g";"ck3"  \ar@{-};
   "k1";"k4" \ar@{-};
"k2";"k4" \ar@{-};
"k3";"k4" \ar@{-};
"c7";"ck3" \ar@{-};
"ck1";"k3" \ar@{-};
"k3";"ck2" \ar@{-};
"ck2";"k2" \ar@{-};
"k2";"c7" \ar@{-};
"c7";"k1" \ar@{-};
"k1";"ck1" \ar@{-};
     \endxy
$$
Figure 1: The cast of characters. Left to right:  $\cal{CK}(4)$, $\cal{J}(4)$, 3-d cube, and $\cal{K}(5)$.
\end{abstract}

\maketitle \tableofcontents
\section{Introduction}
Categorification, as described in \cite{Baez1}, refers to the process of creating a new mathematical
theory by 1) choosing a demonstrably useful concept that you understand fairly well,  2)
replacing some of the sets in its definition with categories, and 3) replacing some of the interesting
equalities with morphisms.
The associahedra are a sequence of polytopes, invented by Stasheff in \cite{Sta},
 whose face poset is defined to correspond to
bracketings of a given list of elements from a set. If instead we take lists of elements from
a category, and define a sequence of polytopes whose faces correspond to either bracketings or
 to certain maps in that
category, then we may naively describe our new definition as a categorified version of the associahedra.
Of course there may be many interesting ways to make this idea precise. Here we focus on just one, which arises
naturally in the study of both topological monoids and category theory.

Categorification of the concept of category itself can be achieved by replacing sets of
morphisms (and/or sets of objects) by categories.
This implies considering enriched (or
internal) categories of \textbf{Cat}, the category of categories.
There is room for
 the composition laws in
enriched or internal categories in  \textbf{Cat} to be weakened.
 To achieve this in a coherent way the
familiar
definitions of bi- and tricategory
utilize the
Stasheff polytopes, or associahedra, as the underlying shapes of axiomatic commuting diagrams.

A parallel categorification of enriched categories creates a definition
    allowing the hom-objects to
  be located in a monoidal bicategory $\bcal{W}.$  This is known as the theory of enriched bicategories.
Rather than the associahedra,  it is a new sequence of polytopes which arises in the
corresponding coherence axioms of enriched bicategories.
  This new sequence
comprises the categorified version of the associahedra which we will be studying. Since they appear in the
composition laws of enriched categories we have chosen to refer to them collectively as the composihedra, or singulary as
the $n^{th}$ composihedron, denoted $\cal{CK}(n)$. The capital ``$\cal{C}$'' stands for ``compose'', or for ``categorify,''
or for ``cone''--
the last since the $n^{th}$ composihedron can be seen as being a subdivision of the topological
 cone of the $n^{th}$ associahedron.
Indeed the polytope $\cal{CK}(n)$ is of dimension $n-1$, while the polytope $\cal{K}(n)$ is of dimension $n-2.$
Decompositions of the boundaries of the earliest terms in our new sequence of polytopes have been seen before,
in the classic sources on enriched
categories such as \cite{Kelly}, the definition of enriched bicategories in \cite{Sean}, and in the
single-object version of the latter: pseudomonoids as defined in \cite{paddy}.

The other half of our title refers to the fact that the polytopes we study here also arise in the
study of $A_n$ and $A_{\infty}$ maps. The multiplihedra were invented by Stasheff,
described by Iwase and Mimura, and generalized by Boardman and Vogt.
They represent the fundamental structure of a weak map between weak structures,
 such as weak $n$-categories or $A_n$ spaces. They form a bimodule over the associahedra,
  and collapse under a quotient map to become the associahedra in the special case of a strictly associative
  monoid as range.
 In the case of a strictly associative monoid as domain
   the multiplihedra collapse to form our new family of polytopes. This is pictured above in Figure 1, in dimension 3.
   The multiplihedron is at the top, composihedron at left, associahedron on the
   right, and the cube which results in the case of both strict domain and range structures at the bottom.

In section~\ref{two} we  briefly review the appearance of polytope sequences in topology and category theory.
In section~\ref{three} we  provide a complete recursive definition of our new polytopes, as well as a description of them
as quotients of the multiplihedra. In section~\ref{four} we go over some basic combinatorial
 results about the composihedra.
In section~\ref{five} we present an alternative definition of the composihedra as a convex hull, based upon the convex hull
realization of the multiplihedra in \cite{multi} and which reflects the quotienting process. In section~\ref{proofsec} we
prove that the convex hull defined in section~\ref{five} is indeed combinatorially equivalent to the complex defined in
section \ref{three}.

A word of introduction is appropriate in regard to the
convex hull algorithm in sections~\ref{five} and~\ref{proofsec}. In the paper on the multiplihedra \cite{multi}
we described how to represent Boardman and Vogt's spaces of painted
trees with $n$ leaves as convex polytopes which are combinatorially equivalent to the CW-complexes
 described by Iwase and Mimura.
 Our algorithm for the vertices of the polytopes is flexible
in that it allows an initial choice of a constant $q$ between zero and one. In the limit as $q\to
1$ the convex hull approaches that of Loday's convex hull representation of the associahedra as
described in \cite{loday}. The limit as $q\to 1$ corresponds to the case for which the mapping
is a homomorphism in that it
strictly respects the
 multiplication. The
limit as $q\to 0$ represents the case for which multiplication in the domain of the morphism in
question is strictly associative. In the limit as $q\to 0$ the convex hulls of the
 multiplihedra approach our newly discovered sequence of
polytopes, the composihedra.

There are two projects for the future that are supported by this work. One is to make rigorous the
implication that enriched bicategories may be exemplified by certain maps of topological monoids.
It could be hoped that if this endeavor
 is successful
that $A_{\infty}$ categories and their maps might also be amenable to the same approach,
yielding more interesting examples. The other project already underway is to
extend the
concept of quotient multiplihedra described here to the graph associahedra
 introduced by Carr and Devadoss, in
\cite{dev}.

There is also a philosophical conclusion to be argued from the results of this work.
 Historically, weak $2$ and
$3$-categories were defined using  the associahedra, which form an operad of topological spaces.
Then the operad structure was taken as fundamental
in many of the functioning definitions of weak $n$-category, as described in \cite{lst3} and \cite{leinster1}.
 Again historically, weak maps of bi- and tri-categories were defined using the multiplihedra,
which form a 2-sided operad bimodule over the associahedra. More recently the operad bimodule
structure has been used to define weak maps of
weak $n$-categories, in \cite{Hess} and \cite{bat}. Thus the facts that the composihedra are
 used for defining  enriched categories and bicategories, and
  that they form an operad bimodule (left-module over the associahedra and right-module over the associative operad
  $t(n) = {*}$ ) lead us to propose that enriching over a weak $n$-category should in general be accomplished by
  use of operad bimodules as well.
%
The philosophy here is that the structure of a bimodule will take into account the
weakness of
the base of enrichment, (where a  weakly associative product is used to form the domain for composition)
  as well as providing for the weakness of the enriched composition
itself.

\section{Polytopes in topology and categories}\label{two}

Here we review the appearance of fundamental families of polytopes in the axioms of higher dimensional category theory.
In both topology and in category theory, the use of these polytopes has proven to be a source of important clues rather
than the final solution. The algebraic structure of the polytope sequence is more important than its combinatorial structure,
although certainly one depends on the other. Thus the operad structure on
the associahedra can be seen as foreshadowing the
intuition for the use of actions of the operad of little $n$-cubes to recognize loop spaces,
 as well as the use of $n$-operad actions in Batanin's definition of $n$-category
\cite{bat}.

\subsection{Associahedron}

The associahedra are the famous sequence of polytopes denoted $\cal{K}(n)$ from \cite{Sta}
which characterize the structure of weakly associative products. $\cal{K}(1) = \cal{K}(2) = $ a single point,
$\cal{K}(3)$ is the line segment, $\cal{K}(4)$ is the pentagon, and $\cal{K}(5)$ is the following 3d shape:
$${\cal K}(5) = \xy 0;/r.3pc/:
   (0,16)*=0{}="a";
  (11,11)*=0{}="b";
  (16,0)*=0{}="c";
  (11,-11)*=0{}="d";
  (0,-16)*=0{}="e";
  (-11,-11)*=0{}="f";
  (-16,0)*=0{}="g";
  (-11,11)*=0{}="h";
  (0,8)*=0{}="2a";
  (8,0)*=0{}="2b";
  (0,-8)*=0{}="2c";
  (-8,0)*=0{}="2d";
(0,4)*=0{}="3a";
  (0,-4)*=0{}="3b";
 (-2.44,5.56)*{\hole}="p";
 (2.44,5.56)*{\hole}="q";
 (-2.44,-5.56)*{\hole}="r";
 (2.44,-5.56)*{\hole}="s";
  "a";"b" \ar@{-}; "a";"b"\ar@{-};
  "b";"c"  \ar@{-};
    "c";"d" \ar@{-};
  "d";"e" \ar@{-};
 "e";"f" \ar@{-};
 "f";"g" \ar@{-};
 "g";"h" \ar@{-};
 "h";"a" \ar@{-};
 "2a";"2b"\ar@{-};
  "2b";"2c"  \ar@{-};
    "2c";"2d" \ar@{-};
  "2d";"2a" \ar@{-};
"2a";"a"\ar@{-};
  "2b";"c"  \ar@{-};
    "2c";"e" \ar@{-};
  "2d";"g" \ar@{-};
"3a";"p"\ar@{-};
"p";"h"\ar@{-};
  "3b";"r"  \ar@{-};
  "r";"f"  \ar@{-};
"3a";"q"\ar@{-};
"q";"b"\ar@{-};
"3b";"s"\ar@{-};
  "s";"d"  \ar@{-};
  "3b";"3a"  \ar@{-};
%
%
     \endxy
$$
The original examples of weakly associative product
 structure are the $A_n$ spaces, topological $H$-spaces with weakly associative
 multiplication of points. Here ``weak'' should be understood as ``up to homotopy.'' That is,
 there is a path in the space
 from $(ab)c$ to $a(bc).$ An $A_{\infty}$-space $X$ is characterized by its admission of an action
 $$\cal{K}(n) \times X^n \to X$$
 for all $n.$

 Categorical examples begin with the monoidal categories as defined in \cite{MacLane},
 where there is a
 weakly associative tensor product of objects. Here ``weak'' officially means ``naturally isomorphic.''
 There is a natural
 isomorphism $\alpha: (U\otimes V) \otimes W \to U\otimes (V \otimes W).$

Recall that a {\it monoidal category} is a category ${\cal V}$
      together with a functor
      $\otimes: {\cal V}\times{\cal V}\to{\cal V}$ such that
     $\otimes$ is  associative up to the coherent natural isomorphism $\alpha$. The coherence
      axiom is given by a commuting pentagon, which is a copy of $\cal{K}(4).$
      \noindent
        \begin{center}
        \resizebox{5.5in}{!}{
              $$
              \xymatrix@C=-25pt{
              &((U\otimes V)\otimes W)\otimes X \text{ }\text{ }
              \ar[rr]^{ \alpha_{UVW}\otimes 1_{X}}
              \ar[ddl]^{ \alpha_{(U\otimes V)WX}}
              &&\text{ }\text{ }(U\otimes (V\otimes W))\otimes X
              \ar[ddr]^{ \alpha_{U(V\otimes W)X}}&\\\\
              (U\otimes V)\otimes (W\otimes X)
              \ar[ddrr]|{ \alpha_{UV(W\otimes X)}}
              &&&&U\otimes ((V\otimes W)\otimes X)
              \ar[ddll]|{ 1_{U}\otimes \alpha_{VWX}}
              \\\\&&U\otimes (V\otimes (W\otimes X))&&&
             }
             $$
             }
                        \end{center}

  Monoidal categories can be
 viewed as single object bicategories. In a bicategory each $\hom(a,b)$ is located
 in \textbf{Cat} rather than \emph{Set}. The composition of morphisms is not exactly associative: there is
 a 2-cell $\sigma: (f \circ g) \circ h \to f\circ (g\circ h ).$
 This associator obeys the same pentagonal commuting diagram as for monoidal categories, as seen in \cite{lst1}.

  Another iteration of categorification results in the
 theory of tricategories. These obey an axiom in which a commuting pasting diagram has
 the underlying  form of the
 associahedron $\cal{K}(5),$ as noticed by the authors of \cite{GPS}.
 The term ``cocycle condition''  for this axiom was popularized by Ross Street, and its connections to
homology are described in many of his papers, including \cite{StAlg}.
  The pattern continues in Trimble's definition
 of tetracategories, where $\cal{K}(6)$ is found as the underlying structure of the corresponding cocycle condition.
   The associahedra are also seen as the classifying spaces of certain
  categories of trees, as illustrated in \cite{lst1}, and as the foundation for the free strict $\omega$-categories
  defined in \cite{StAlg}.

  The associahedra are also the starting point for defining the
one-dimensional analogs of the full $n$-categorical comparison of
delooping and enrichment. One dimensional weakened versions of
enriched categories have been well-studied in the field of
differential graded algebras and $A_{\infty}$-categories, the many
object generalizations of Stasheff's $A_{\infty}$-algebras
\cite{Sta}.
An $A_{\infty}$-category category is basically a category
 ``weakly''
 enriched over chain complexes of modules, where the weakening in
 this case is accomplished by summing the composition chain maps
 to zero
 (rather than by requiring commuting diagrams). It is also easily
 described as an algebra over a certain operad.

\subsection{Multiplihedron}

The complexes now known as the multiplihedra $\cal{J}(n)$ were first pictured by Stasheff, for $n \le 4$ in
\cite{sta2}. They were introduced in order to approach a full description of the category of
$A_{\infty}$ spaces by providing the underlying structure for morphisms which preserved the
structure of the domain space ``up to homotopy'' in the range. Recall that an $A_{\infty}$ space
itself is a monoid only ``up to homotopy,'' and is recognized by a continuous action of the
associahedra as described in \cite{Sta}. Thus the multiplihedra are used to characterize the
$A_{\infty}$-maps. A map $f:X\to Y$ between $A_{\infty}$-spaces is an $A_{\infty}$-map if there exists
an action
$$\cal{J}(n) \times X^n \to Y $$
for all $n$, which is equal to the action of $f$ for $n=1,$ and obeying associativity
constraints as described in \cite{sta2}.
 Stasheff described how to construct the 1-skeleton of these complexes in \cite{sta2}, but
stopped short of a full combinatorial description.
 Iwase and Mimura in \cite{IM}
give the first detailed definition of the sequence of complexes $\cal{J}(n)$ now known as the
multiplihedra, and describe their combinatorial properties.

Spaces of painted trees were first introduced by
Boardman and Vogt in \cite{BV1} to help describe multiplication in (and morphisms of) topological monoids that
are not strictly associative (and whose morphisms do not strictly respect that multiplication.)
 The $n^{th}$
multiplihedron is a  $CW$-complex whose vertices correspond to the unambiguous ways of multiplying
and applying an $A_{\infty}$-map to $n$ ordered elements of an $A_{\infty}$-space. Thus the
vertices correspond to the binary painted trees with $n$ leaves. In \cite{multi} a new realization of the multiplihedra
based upon a map from these binary painted trees to Euclidean space is used to
unite the approach to $A_n$-maps of Stasheff, Iwase and Mimura to that of Boardman and Vogt.

Here are the first few low dimensional multiplihedra. The  vertices are labeled, all but some of
those in the last picture. There the bold vertex in the large pentagonal facet has label
$((f(a)f(b))f(c))f(d)$ and the bold vertex in the small pentagonal facet has label $f(((ab)c)d).$
The others can be easily determined based on the fact that those two pentagons are copies of the
associahedron $\cal{K}(4),$ that is to say all their edges are associations.
$$
{\cal J}(1) = \bullet~{_{f(a)}}
  %
 %
$$
\\\\
$$
{\cal J}(2)= \xy 0;/r.25pc/:
    (-8,0)*{_{f(a)f(b)}~}="e";
  (8,0)*{~_{f(ab)}}="v6" \ar@{};
  "e"; "v6" \ar@{*{\bullet}-*{\bullet}} ;"e"; "v6" \ar@{-} \ar@{};
  \endxy
$$
\\\\
$$
{\cal J}(3)=\hspace{.75in} \xy 0;/r.08pc/:
 (-32,48);(0,48) *=0{^{(f(a)f(b))f(c)}\hspace{1.75in}}  \ar@{*{\bullet}-}; (-32,48);(0,48) *=0{} \ar@{-};
 (0,48);(32,48)*=0{^{f(a)(f(b)f(c))}\hspace{-1in}}  \ar@{*{\bullet}-};
  (32,48);(44,24)*=0{}  \ar@{-};
  (44,24); (56,0)*=0{^{f(a)f(bc)}\hspace{-.75in}}  \ar@{*{\bullet}-};
   (56,0); (44,-24)*=0{}  \ar@{-};
   (44,-24); (32,-48)*=0{^{f(a(bc))}\hspace{-.55in}}  \ar@{*{\bullet}-};
    (32,-48);(0,-48)*=0{}  \ar@{-};
    (0,-48); (-32,-48)*=0{^{f((ab)c)}\hspace{.55in}}  \ar@{*{\bullet}-};
     (-32,-48);(-44,-24)*=0{}  \ar@{-};
     (-44,-24); (-56,0)*=0{^{f(ab)f(c)}\hspace{.75in}}  \ar@{*{\bullet}-};
      (-56,0);(-44,24)*=0{}  \ar@{-};
      (-44,24); (-32,48)*=0{}  \ar@{};
      \endxy
$$
\\\\
$$
{\cal J}(4) = \hspace{1in}\xy 0;/r.55pc/:
   (-4,3)*=0{}="a1";
  (4,3)*=0{}="a2";
  (6,-2)*=0{\bullet}="a3";
  (0,-7)*=0{}="a4";
  (-6,-2)*=0{}="a5";
  (-6,6.5)*=0{}="b1";
  (6,6.5)*=0{}="b2";
  (10,-4)*=0{\bullet}="b3";
  (0,-12)*=0{}="b4";
  (-10,-4)*=0{}="b5";
  (-8,10.5)*=0{^{f(a)(f(bc)f(d))}\hspace{1in}}="c1";
  (8,10.5)*=0{^{(f(a)f(bc))f(d)}\hspace{-1in}}="c2";
  (13,7.5)*=0{^{f(a(bc))f(d)}\hspace{-1in}}="c3";
  (16,0)*=0{^{f((ab)c)f(d)}\hspace{-1in}}="c4";
  (14,-6)*=0{^{(f(ab)f(c))f(d)}\hspace{-1in}}="c5";
  (4,-14)*=0{^{f(ab)(f(c)f(d))}\hspace{-1in}}="c6";
  (0,-16)*=0{}="c7";
  (0,-18)*=0{^{f(ab)f(cd)}};
  (-4,-14)*=0{^{(f(a)f(b))f(cd)}\hspace{1in}}="c8";
  (-14,-6)*=0{^{f(a)(f(b)f(cd))}\hspace{1in}}="c9";
  (-16,0)*=0{^{f(a)(f(b(cd)))}\hspace{1in}}="c10";
  (-13,7.5)*=0{^{f(a)f((bc)d)}\hspace{1in}}="c11";
 (-9,-1.4)*{\hole}="x"; (-9,-1.4)*{\hole}="x";
(-6.8,4.4)*{\hole}="y";
  (9,-1.4)*{\hole}="w";
  (0,-12)*{\hole}="v";
 (6.8,4.4)*{\hole}="z";
  "a1";"a2" \ar@{-}; "a1";"a2"\ar@{-};
  "a2";"a3"  \ar@{-};
    "a3";"a4" \ar@{-};
  "a4";"a5" \ar@{-};
  "a5";"a1" \ar@{-};
 "b1";"b2" \ar@{-};
 "b2";"b3" \ar@{-};
 "b3";"b4" \ar@{-};
 "b4";"b5" \ar@{-};
 "b5";"b1" \ar@{-};
 "c1";"c2"\ar@{-};
  "c2";"c3"  \ar@{-};
    "c3";"c4" \ar@{-};
  "c4";"c5" \ar@{-};
"c5";"c6"\ar@{-};
  "c6";"c7"  \ar@{-};
    "c7";"c8" \ar@{-};
  "c8";"c9" \ar@{-};
"c9";"c10"\ar@{-};
  "c10";"c11"  \ar@{-};
  "c11";"c1"  \ar@{-};
"b1";"c1"\ar@{-}; "b2";"c2"\ar@{-}; "b3";"c5"\ar@{-};
  "b4";"c6"  \ar@{-};
  "b4";"c8"  \ar@{-};
  "b5";"c9"  \ar@{-};
"a1";"y"  \ar@{-}; "y";"c11"  \ar@{-}; "a2";"z"  \ar@{-}; "z";"c3"  \ar@{-}; "a3";"w"  \ar@{-};
"w";"c4"  \ar@{-}; "a4";"v"  \ar@{-}; "v";"c7"  \ar@{-}; "a5";"x"  \ar@{-}; "x";"c10"  \ar@{-};
     \endxy
     ~
$$
\\\\

The multiplihedra  also appear in higher category theory. The definitions of  bicategory and
tricategory homomorphisms each include commuting pasting diagrams as seen in \cite{lst1} and
\cite{GPS} respectively. The two halves of the axiom for a bicategory homomorphism together form
the boundary of the multiplihedra $\cal{J}(3),$  and the two halves of the axiom for a tricategory
homomorphism together form the boundary of $\cal{J}(4).$ Since weak $n$-categories can be
understood as being the algebras of higher operads, these facts can be seen as the motivation for
defining morphisms of operad (and $n$-operad) algebras in terms of their bimodules. This definition
is mentioned in \cite{bat} and developed in detail in \cite{Hess}. In the latter paper it is
pointed out that the bimodules in question must be co-rings, which have a co-multiplication with
respect to the bimodule product over the operad.

The multiplihedra have also
appeared in several areas related to deformation theory and $A_{\infty}$ category theory. A
diagonal map is constructed for these polytopes in \cite{umble}. This allows a functorial monoidal
structure for certain categories of $A_{\infty}$-algebras and $A_{\infty}$-categories. A different,
possibly equivalent, version of the diagonal is presented in \cite{MS}. The 3 dimensional version
of the multiplihedron is called by the name Chinese lantern diagram in \cite{Yet}, and used to
describe deformation of functors. There is a forthcoming paper by Woodward and Mau in which a new
realization of the multiplihedra as moduli spaces of disks with additional structure is presented
\cite{Mau}. This realization  allows the authors  to define
$A_n$-functors as well as morphisms of cohomological field theories.

\subsection{Quotients of the multiplihedron}

The special multiplihedra in the case for which multiplication in the range is strictly
associative were found by
 Stasheff in \cite{sta2} to be precisely the associahedra.
  Specifically, the quotient of $\cal{J}(n)$ under
 the equivalence  generated by $(f(a)f(b))f(c)= f(a)(f(b)f(c))$ is combinatorially equivalent to
$\cal{K}(n+1).$ This projection is pictured on the right hand side of Figure 1, in dimension 3.
Recall that the edges of the multiplihedra
correspond to either an association $(ab)c\to a(bc)$ or to a preservation $f(a)f(b)\to f(ab).$
The
associations can either be in the range: $(f(a)f(b))f(c)\to f(a)(f(b)f(c))$; or the image of a
domain association:  $f((ab)c)\to f(a(bc)).$

  It was long assumed that the case for
which the
 domain was associative would likewise yield the associahedra, but we will demonstrate otherwise.
 The $n^{th}$ composihedron may be described as the quotient of the  $n^{th}$ multiplihedron under the
 equivalence generated by $f((ab)c)= f(a(bc))$. Of course this is implied by associativity in the domain,
 where $(ab)c = a(bc).$ We will take this latter view throughout, but we note that there may be interesting functions
 for which $f((ab)c)= f(a(bc))$ even if the domain is not strictly associative.

Here are the first few composihedra with vertices labeled
 as in the multiplihedra, but with the assumption that the domain is associative.
  (For these pictures the label actually appears over the vertex.)
  Notice how the $n$-dimensional  composihedron is a subdivided topological cone on the
 $(n-1)$-dimensional  associahedron. The Schlegel diagram is shown for $\cal{CK}(4),$ viewed with a copy of
 $\cal{K}(4)$ as the perimeter.
 \begin{tiny}
 $$\cal{CK}(1):~~~f(a)~~~~~~
 $$
 $$
 \cal{CK}(2):~~\xymatrix{ f(a)f(b)\ar@{-}[rr]&&f(ab)}~~~~~~~
 $$
 $$
 \cal{CK}(3): \xymatrix@R=3.25pt@C=2.25pt{
    &(f(a)f(b))f(c)
    \ar@{-}[rr]
    \ar@{-}[ddl]
    && f(a)(f(b)f(c))
    \ar@{-}[ddr]&\\\\
    f(ab)f(c)
    \ar@{-}[ddrr]
    &&&& f(a)f(bc)
      \ar@{-}[ddll]
      \\\\&& f(abc) &&&
 }
 $$
 $$
 \xymatrix@R=15.5pt@C=-9.5pt{\cal{CK}(4):&&((f(a)f(b))f(c))f(d)\ar@{-}[rrrr]\ar@{-}[rd]\ar@{-}[llddd]&&&&(f(a)(f(b)f(c)))f(d)\ar@{-}[rrddd]\ar@{-}[ld]
       \\
       &&&(f(ab)f(c))f(d)\ar@{-}[rd]\ar@{-}[ld]&&(f(a)f(bc))f(d)\ar@{-}[ld]\ar@{-}[ddr]
       \\
       && f(ab)(f(c)f(d))\hspace{-.55in}\ar@{-}[rd]&& f(abc)f(d)\ar@{-}[d]
       \\
       (f(a)f(b))(f(c)f(d))\ar@{-}[rru]\ar@{-}[rrd]\ar@{-}[rrrrdddd]&&& f(ab)f(cd)\ar@{-}[r]& f(abcd) &\txt{\\$f(a)(f(bc)f(d))$\\}\hspace{-.85in}\ar@{-}[ld]&~ &\txt{\\$f(a)((f(b)f(c))f(d))$\\}\hspace{-.5in}\ar@{-}[llldddd]\ar@{-}[l]&~
       \\
       &&\txt{\\$(f(a)f(b))f(cd)$\\\\}\hspace{-.55in}\ar@{-}[ru]\ar@{-}[rrd]&& f(a)f(bcd)\ar@{-}[u]
       \\
       &&&& f(a)(f(b)f(cd))\ar@{-}[u]
       \\\\
       &&&& f(a)(f(b)(f(c)f(d))) \ar@{-}[uu]
   }
 $$
\end{tiny}


   Any confusion can probably be traced to the fact that
  the two sequences of polytopes are identical for the first few terms. They
    diverge first in dimension three, at which dimension
  the associahedron has 9 facets and 14 vertices, while the composihedron has 10 facets and 15 vertices.
   Another similarity at this dimension is that
  both polytopes have exactly 6 pentagonal facets; the difference is in the number of quadrilateral facets.
  The difference is also clear from
   the fact that the associahedra are simple polytopes, whereas starting at dimension three the composihedra are not.

   Here are easily compared pictures of the associahedron and composihedron in dimension 3.
\begin{small}
$$
{\cal K}(5) = \xy 0;/r.45pc/:
   (0,16)*=0{}="a";
  (-11,11)*=0{}="b";
  (-16,0)*=0{}="c";
  (-11,-11)*=0{}="d";
  (0,-16)*=0{}="e";
  (11,-11)*=0{}="f";
  (16,0)*=0{}="g";
  (11,11)*=0{}="h";
  (0,5)*=0{}="2a";
  (-5,0)*=0{}="2b";
  (0,-5)*=0{}="2c";
  (5,0)*=0{}="2d";
(7,7)*=0{}="3a";
  (7,-7)*=0{}="3b";
  (-7,-7)*=0{}="3c";
 (0,8.556)*{\hole}="x"; (0,8.556)*{\hole}="x";
(7,0)*{\hole}="y";
  (0,-8.556)*{\hole}="w";
  (-8.556,0)*{\hole}="v";
 (0,-7)*{\hole}="z";
  "a";"b" \ar@{-}; "a";"b"\ar@{-};
  "b";"c"  \ar@{-};
    "c";"d" \ar@{-};
  "d";"e" \ar@{-};
 "e";"f" \ar@{-};
 "f";"g" \ar@{-};
 "g";"h" \ar@{-};
 "h";"a" \ar@{-};
 "2a";"2b"\ar@{-};
  "2b";"2c"  \ar@{-};
    "2c";"2d" \ar@{-};
  "2d";"2a" \ar@{-};
"2a";"a"\ar@{-};
  "2b";"c"  \ar@{-};
    "2c";"e" \ar@{-};
  "2d";"g" \ar@{-};
"3a";"h"\ar@{-};
  "3b";"f"  \ar@{-};
"3a";"y"\ar@{-};
"y";"3b"\ar@{-};
"3a";"x"\ar@{-};
  "x";"b"  \ar@{-};
"3b";"w"\ar@{-};
  "w";"d"  \ar@{-};
%
     \endxy
     ~
\cal{CK}(4) = \xy 0;/r.45pc/:
   (0,16)*=0{}="a";
  (-11,11)*=0{}="b";
  (-16,0)*=0{}="c";
  (-11,-11)*=0{}="d";
  (0,-16)*=0{}="e";
  (11,-11)*=0{}="f";
  (16,0)*=0{}="g";
  (11,11)*=0{}="h";
  (0,5)*=0{}="2a";
  (-5,0)*=0{}="2b";
  (0,-5)*=0{}="2c";
  (5,0)*=0{}="2d";
(7,7)*=0{}="3a";
  (7,-7)*=0{}="3b";
  (-7,-7)*=0{}="3c";
 (0,8.556)*{\hole}="x"; (0,8.556)*{\hole}="x";
(7,0)*{\hole}="y";
  (0,-8.556)*{\hole}="w";
  (-8.556,0)*{\hole}="v";
 (0,-7)*{\hole}="z";
  "a";"b" \ar@{-}; "a";"b"\ar@{-};
  "b";"c"  \ar@{-};
    "c";"d" \ar@{-};
  "d";"e" \ar@{-};
 "e";"f" \ar@{-};
 "f";"g" \ar@{-};
 "g";"h" \ar@{-};
 "h";"a" \ar@{-};
 "2a";"2b"\ar@{-};
  "2b";"2c"  \ar@{-};
    "2c";"2d" \ar@{-};
  "2d";"2a" \ar@{-};
"2a";"a"\ar@{-};
  "2b";"c"  \ar@{-};
    "2c";"e" \ar@{-};
  "2d";"g" \ar@{-};
"3a";"h"\ar@{-};
  "3b";"f"  \ar@{-};
"3a";"y"\ar@{-};
"y";"3b"\ar@{-};
"3a";"x"\ar@{-};
  "x";"b"  \ar@{-};
%
%
"3b";"z"\ar@{-};
  "z";"3c"  \ar@{-};
  "3c";"d"  \ar@{-};
 "3c";"v"  \ar@{-};
"v";"b"  \ar@{-};
     \endxy
$$
\end{small}

J. Stasheff points out that we can obtain the complex $\cal{K}(5)$ by deleting a single edge of $\cal{CK}(4).$

\subsection{Enriched bicategories and the composihedron}

Remarkably, the same minor error of recognition between the associahedron and composihedron may have been
  made by category theorists who wrote down the coherence
  axioms of enriched bicategory theory.  The first few polytopes in our new  sequence correspond to
   cocyle coherence conditions in the definition of enriched bicategories
 as in  \cite{Sean}. It is incorrectly implied there
   that
  the final cocycle condition has the combinatorial form of the associahedron $\cal{K}(5)$.
  The axiom actually consists of two pasting diagrams which when glued along their
   boundary are seen to form the composihedron $\cal{CK}(4)$ instead.
  Also note that the first few composihedra correspond as well to
 the axioms for pseudomonoids in a monoidal bicategory as seen in \cite{paddy}. This fact is to be expected, since
 pseudomonoids are just single object enriched bicategories.

Little has been published about enriched bicategories, although the theory is used in
recent research papers such as \cite{gurski} and \cite{shulman}. The full definition of enriched
bicategory is worked out in \cite{Sean}. It is repeated
with the simplification of a strict monoidal $\bcal{W}$ in \cite{Lack}, in which case the
commuting diagrams have the form of cubes.  (Recall that when both the range and domain are
 strictly associative that the multiplihedra collapse to
 become the cubes, as shown in \cite{BV1}.) An earlier  definition of
(lax) enriched bicategory as a lax functor of certain tricategories is found in \cite{GPS} and is also reviewed in \cite{Lack}; in retrospect this
formulation is to be expected since the composihedra are special cases of multiplihedra.

Here for reference is the definition of enriched bicategories, closely following \cite{Sean}.
Let $({\bcal W}, \otimes, \alpha, \pi, I)$ be a monoidal bicategory, as defined in \cite{GPS} or in \cite{Sean},
 or comparably
in \cite{Baez2},
with $I$ a strict unit (but we will include the identity cells in our diagrams).
Note that item (5) in the following definition corrects an obvious typo in the corresponding item of \cite{Sean},
 and that
the 2-cells in item (6) have been somewhat rearranged from that source, for easier comparison to the polytope
$\cal{CK}(4)$.
\begin{definition}
An enriched bicategory ${\bcal  A}$ over ${\bcal W}$  is:
\end{definition}
\begin{enumerate}
\item a collection of objects Ob${\bcal A},$
\item hom-objects ${\bcal A}(A,B) \in \text{Ob}{\bcal W}$ for each $A,B \in \text{Ob}{\bcal A}$,
\item composition 1-cells
$${\bcal M_{ABC}}:{\bcal A}(B,C)\otimes {\bcal A}(A,B) \to {\bcal A}(A,C)$$
 in ${\bcal W}$ for each $A,B,C \in \text{Ob}{\bcal A}$,
\item an identity 1-cell $\cal{J}_A:I\to\bcal{A}(A,A)$ for each object  $A,$
\item  2-cells ${\bcal M}_2$ in ${\bcal W}$ for each $A,B,C,D \in \text{Ob}{\cal A}$:
\noindent
                     \begin{center}
                 \resizebox{6in}{!}{
                 $$
                          \xymatrix@R=7pt@C=-5pt{
                           &({\bcal A}(C,D)\otimes {\bcal A}(B,C))\otimes {\bcal A}(A,B)
                           \ar[rr]^{\alpha}
                           \ar[ddl]^>>>>>>>>>>>>{{\bcal M} \otimes 1}
                           &~~~~~~~~~~~~~~~~~~~~~~~~~~~~~~~~~~~~~~&{\bcal A}(C,D)\otimes({\bcal A}(B,C)\otimes {\bcal A}(A,B))
                           \ar[ddr]_>>>>>>>>>>>>{1 \otimes {\bcal M}}&\\\\
                           {\bcal A}(B,D)\otimes {\bcal A}(A,B)
                           \ar[ddrr]_{{\bcal M}}
                           &\ar@{=>}[rr]^{{\bcal M}_2}&&&{\bcal A}(C,D)\otimes {\bcal A}(A,C)
                             \ar[ddll]^{{\bcal M}}
                             \\\\&&{\bcal A}(A,D)&&&
                        }$$ }\end{center}
\item ...which obey the following cocycle condition for each $A,B,C,D,E$.
In the following we abbreviate the hom-objects and composition by
${\bcal M}:BC,AB \to AC.$
\\\\
\begin{tiny}
$\xy 0;/r.75pc/:
   (0,16)*{DE,(CD,(BC,AB))}="a";
  (-11,11)*{(DE,CD),(BC,AB)}="b";
  (-16,0)*{((DE,CD),BC),AB}="c";
  (-11,-11)*{(CE,BC),AB}="d";
  (0,-16)*{BE,AB}="e";
  (11,-11)*{AE}="f";
  (16,0)*{DE,AD}="g";
  (11,11)*{DE,(CD,AC)}="h";
  (0,5)*{DE,((CD,BC),AB)}="2a";
  (-5,0)*{(DE,(CD,BC)),AB}="2b";
  (0,-5)*{(DE,BD),AB}="2c";
  (5,0)*{DE,(BD,AB)}="2d";
(7,7)*{{\bcal M}_2\Uparrow}="3a";
  (7,-7)*{{\bcal M}_2\Uparrow}="3b";
  (-7,-7)*{{\bcal M}_2\Uparrow}="3c";
  (-7,7)*{\pi\Uparrow};
  (0,2)*{=};
%
  "a";"b" \ar@{<-}; "a";"b"\ar@{<-};
  "b";"c"  \ar@{->};
    "c";"d" \ar@{->};
  "d";"e" \ar@{->};
 "e";"f" \ar@{<-};
 "f";"g" \ar@{<-};
 "g";"h" \ar@{<-};
 "h";"a" \ar@{<-};
 "2a";"2b"\ar@{->};
  "2b";"2c"  \ar@{->};
    "2c";"2d" \ar@{<-};
  "2d";"2a" \ar@{->};
"2a";"a"\ar@{<-};
  "2b";"c"  \ar@{->};
    "2c";"e" \ar@{->};
  "2d";"g" \ar@{-};
%
%
     \endxy$
\\
\txt{\Huge{=}}
\\
$\xy 0;/r.75pc/:
     (0,16)*{DE,(CD,(BC,AB))}="a";
  (-11,11)*{(DE,CD),(BC,AB)}="b";
  (-16,0)*{((DE,CD),BC),AB}="c";
  (-11,-11)*{(CE,BC),AB}="d";
  (0,-16)*{BE,AB}="e";
  (11,-11)*{AE}="f";
  (16,0)*{DE,AD}="g";
  (11,11)*{DE,(CD,AC)}="h";
  (0,5)*=0{}="2a";
  (-5,0)*=0{}="2b";
  (0,-5)*=0{}="2c";
  (5,0)*=0{}="2d";
(7,7)*{(DE,CD),AC}="3a";
  (7,-7)*{CE,AC}="3b";
  (-7,-7)*{CE,(BC,AB)}="3c";
    (11,0)*{{\bcal M}_2\Uparrow};
      (0,-11)*{{\bcal M}_2\Uparrow};
      (0,0)*{=};
      (-11,-4)*{=};
      (4,11)*{=};
%
 "a";"b" \ar@{<-}; "a";"b"\ar@{<-};
  "b";"c"  \ar@{->};
    "c";"d" \ar@{->};
  "d";"e" \ar@{->};
 "e";"f" \ar@{<-};
 "f";"g" \ar@{<-};
 "g";"h" \ar@{<-};
 "h";"a" \ar@{->};
"3a";"h"\ar@{->};
  "3b";"f"  \ar@{->};
"3a";"3b"\ar@{<-};
"3a";"b"  \ar@{<-};
%
%
"3b";"3c"  \ar@{<-};
  "3c";"d"  \ar@{<-};
 "3c";"b"  \ar@{-};
     \endxy$
\end{tiny}\\\\
\item  Unit 2-cells:
$$
                      \xymatrix@C=-5pt{
                       I\otimes {\bcal A}(A,B)
                      \ar[rrd]^{=}
                      \ar[dd]_{{\cal J}_{B}\otimes 1}
                      &&&&{\bcal A}(A,B)\otimes  I
                      \ar[dd]^{{1}\otimes {\cal J}_{A}}
                      \ar[lld]_{=}\\
                      &*{\lambda\Uparrow\hspace{.5in}}&{\bcal A}(A,B)&*{\hspace{.5in}\rho\Uparrow}&\\
                      {\bcal A}(B,B)\otimes {\bcal A}(A,B)
                      \ar[rru]|{{\bcal M}_{ABB}}
                      &&&& {\bcal A}(A,B)\otimes {\bcal A}(A,A)
                      \ar[llu]|{{\bcal M}_{AAB}}
                      }
              $$
\item ... which obey a pasting condition of their own, which we will omit for brevity.
\end{enumerate}

If instead of the existence of 2-cells postulated in (5) and (7) we had required that the
diagrams commute, we would recover the definition of an enriched category.
Note that the pentagonal axiom for a monoidal category  at the beginning of this
section has the form of the associahedron
$\cal{K}(4)$ but the commuting pentagon for enriched categories (here the domain and range of $\bcal{M}_2$),  is actually
better described as having the form of $\cal{CK}(3).$

Here is the cocycle condition (6), pasted together and shown as a Schlegel diagram for comparison
to the similarly displayed picture above of $\cal{CK}(4).$
To save space ``$\bullet~\bullet\to\bullet$'' will
                       represent
                       ${\bcal M}_1:{\bcal A}(B,C)\otimes {\bcal A}(A,B) \to {\bcal A}(A,C)$.
%
%
 \noindent
              \begin{center}
              \resizebox{4.25in}{!}{
              \begin{footnotesize}
  $$
  \xymatrix@R=18.5pt@C=2.5pt
  {&&((\bullet~\bullet)\bullet)\bullet\ar[rrrr]\ar[rd]\ar[llddd]&&&&(\bullet(\bullet~\bullet))\bullet\ar[rrddd]\ar[ld]
       \\
       &&&(\bullet~\bullet)\bullet\ar[rd]\ar[ld]&\buildrel{\bcal M}_2\over\Longrightarrow&(\bullet~\bullet)\bullet\ar[ld]\ar[rdd]
       \\
       && \bullet(\bullet~\bullet)\ar[rd]&{\bcal M}_2\Downarrow&
       \bullet~\bullet\ar[d]&\ar@{=>}[d]^{{\bcal M}_2}
       \\
       (\bullet~\bullet)(\bullet~\bullet)\ar[rru]\ar[rrd]\ar[rrrrdddd]&&& \bullet~\bullet\ar[r]& \bullet && \bullet(\bullet~\bullet)\ar[lld]&& \bullet((\bullet~\bullet)\bullet)\ar[ll]\ar[lllldddd]
       \\
       &&~~~~~~(\bullet~\bullet)\bullet\ar[ru]\ar[rrd]&{\bcal M}_2\Downarrow&
       \bullet~\bullet\ar[u]&\ar@{=>}[d]^{{\bcal M}_2}
       \\
       &&&& \bullet(\bullet~\bullet)\ar[u]&&&
       \\\\
       &&&& \bullet(\bullet(\bullet~\bullet)) \ar[uu]
   }
 $$
 \end{footnotesize}
                        }
                        \end{center}
From this perspective it is easier to visualize how, in the case
that $\bcal{W}=$ \textbf{Cat}, an enriched bicategory is just a bicategory.
In that case $\bcal{M}_2$ is renamed $\sigma,$ and the enriched cocycle axiom becomes the ordinary bicategory
 cocycle axiom with the
underlying form of $\cal{K}(4).$ Thus the $n^{th}$ composihedron
can be seen to contain in its structure two copies of the $n^{th}$ associahedron:
one copy as a particular upper facet and another as a certain collection of facets that can function as labels for the
facets of the corresponding associahedron. Of course the second copy is only seen upon decategorification!

\section{Recursive definition}\label{three}

In \cite{IM} the authors give a geometrically defined $CW$-complex definition of the multiplihedra,
and then demonstrate
the recursive combinatorial structure. Here
 we describe how to collapse that structure for the case of a strictly associative
domain, and achieve a recursive definition of the composihedra.

Pictures in the form of \emph{painted binary trees} can be drawn to represent the multiplication of
several objects in a monoid, before or after their passage to the image of that monoid under a
homomorphism. We use the term ``painted'' rather than ``colored'' to distinguish our trees with two
edge colorings, ``painted'' and ``unpainted,'' from the other meaning of colored, as in colored
operad or multicategory. We will refer to the exterior vertices of the tree as the  root and the
leaves , and to the interior vertices as nodes.  This will be handy since then we can reserve the
term ``vertices'' for reference to polytopes. A painted binary tree is painted beginning at
the root edge (the leaf edges are unpainted), and always painted in such a way that there are only three
types of nodes. They are:
$$
\xy 0;/r.25pc/:
  (-10,20)*=0{}="a"; (-2,20)*=0{}="b";
  (2,20)*{}="c"; (10,20)*{}="d";
  (14,20)*=0{}="e";
  (-6,12)*=0{\bullet}="v1"; (6,12)*=0{\bullet}="v2";
  (-6,4)*{\txt{\\\\(1)}}="v3"; (6,4)*{\txt{\\\\(2)}} ="v4";
  (14,12)*=0{\bullet}="v5"; (14,4)*{\txt{\\\\(3)}}="v6" \ar@{};
 "a"; "v1" \ar@{-};"a"; "v1" \ar@{-};
 "b"; "v1" \ar@{-};
 "v1"; "v3" \ar@{-}\ar@{};
 "c"; "v2" \ar@{-}; "c"; "v2" \ar@{-};
 "d"; "v2" \ar@{-};
 "v2"; "v4" \ar@{-} \ar@{};
 "c"; "v2" \ar@{=};"c"; "v2" \ar@{=};
 "d"; "v2" \ar@{=};
 "v2"; "v4" \ar@{=} \ar@{};
 "e"; "v5" \ar@{-} ;"e"; "v5" \ar@{-} \ar@{};
 "v5"; "v6" \ar@{-} ;"v5"; "v6" \ar@{-} \ar@{};
 "v5"; "v6" \ar@{=};"v5"; "v6" \ar@{=};
  \endxy
$$
This limitation on nodes implies that painted regions must be connected, that painting must never
end precisely at a trivalent node, and that painting must proceed up both branches of a trivalent
node. To see the promised representation we let the left-hand, type (1) trivalent node above stand
for multiplication in the domain; the middle, painted, type (2)  trivalent node above stand for
multiplication in the range; and the right-hand  type (3) bivalent  node stand for the action of
the mapping. For instance, given $a,b,c,d$ elements of a monoid, and $f$ a monoid morphism, the
following diagram represents the operation resulting in the product $f(ab)(f(c)f(d)).$
\begin{small}
 $$
\xy  0;/r.25pc/:
  (-10,20)*{a}="a"; (-2,20)*{b}="b";
  (2,20)*{c}="c"; (10,20)*{d}="d";
  (-6,12)*=0{\bullet}="v1"; (6,12)*=0{\bullet}="v2";
  (0,0)*=0{\bullet}="v3";
  (0,-7)*{\txt{\\$f(ab)(f(c)f(d))$}}="v4";
  (4,16)*=0{\bullet}="va";
  (8,16)*=0{\bullet}="vb";
  (-4,8)*=0{\bullet}="vc" \ar@{};
 "a" ;"vc" \ar@{-};"a" ;"vc" \ar@{-};
 "b" ;"v1" \ar@{-}  ;
 "c" ;"va" \ar@{-} ;
 "d" ;"vb" \ar@{-} \ar@{};
 "va";"v2" \ar@{=} ;"va";"v2" \ar@{=} ;
 "vb";"v3" \ar@{=} ;
 "vc" ;"v3" \ar@{=} ;
 "v3" ;"v4" \ar@{=} \ar@{};
 %
 %
 "va";"v2" \ar@{-} ;"va";"v2" \ar@{-} ;
 "vb";"v3" \ar@{-} ;
 "vc" ;"v3" \ar@{-} ;
 "v3" ;"v4" \ar@{-} ;
 \endxy
$$
\end{small}

To define the  face poset structures of the multiplihedra and composihedra we
 need painted trees that are no
longer binary. Here are the three new types of node allowed in a general painted tree. They
correspond to the the node types (1), (2) and (3) in that they are painted in similar fashion. They
generalize types (1), (2), and  (3) in that each has greater or equal valence than the
corresponding earlier node type.
\begin{small}
 $$
\xy  0;/r.45pc/:
  (-14,20)*=0{}="a"; (-10,20)*=0{}="b";
  (-2,20)*=0{}="c"; (2,20)*=0{}="d";
  (6,20)*=0{}="e"; (14,20)*=0{}="f";
  (-7,17) *{\dots}; (9,17) *{\dots};
  (-8,14)*=0{\bullet}="v1"; (8,14)*=0{\bullet}="v2";
  (-8,8)*{\txt{\\\\(4)}}="v3";
  (8,8)*{\txt{\\\\(5)}}="v4"; \ar@{};
 (18,20)*=0{}="g"; (24,20)*=0{}="h";
 (21,17) *{\dots};
 (21,14)*=0{\bullet}="v5";
  (21,8)*{\txt{\\\\(6)}}="v6";
 "a" ;"v1" \ar@{-};"a" ;"v1" \ar@{-};
 "b" ;"v1" \ar@{-};
 "c" ;"v1" \ar@{-};
 "d" ;"v2" \ar@{-};
 "e" ;"v2" \ar@{-};
 "f" ;"v2" \ar@{-};
 "v1" ;"v3" \ar@{-};
 "v2" ;"v4" \ar@{-};
 "g" ;"v5" \ar@{-};
 "h" ;"v5" \ar@{-};
 "v5" ;"v6" \ar@{-};
 \ar@{} ;
"d" ;"v2" \ar@{=};"d" ;"v2" \ar@{=};
 "e" ;"v2" \ar@{=};
 "f" ;"v2" \ar@{=};
 "v2" ;"v4" \ar@{=};
 "v5" ;"v6" \ar@{=};
 \endxy
$$
\end{small}
\begin{definition}
By \emph{refinement} of painted trees we refer to the
relationship: $t$ refines $t'$ means that $t'$ results from the collapse of some of the internal edges of
$t$.  This is a partial order on
$n$-leaved painted trees, and we write $t < t'.$
Thus the binary painted trees are refinements of the trees having nodes of type (4)-(6).
\emph{Minimal refinement} refers to
the following specific case of refinement:
 $t$ minimally refines $t''$ means that $t$ refines $t''$ and also that there is no
 $t'$ such that both $t$ refines $t'$ and $t'$ refines $t''$.
\end{definition}

The poset of painted trees with $n$ leaves is precisely the face poset of the $n^{th}$ multiplihedron.

\begin{definition}
Two painted trees are said to be \emph{domain equivalent} if two requirements are satisfied:
 1) they both refine the same tree, and 2)
 the collapses
 involved in both refinements are of edges whose two nodes are of type (1) or type (4). That is, these
  collapses will be of
 internal unpainted edges with no adjacent painted edges. Locally the equivalences will appear as follows:
 \begin{small}
$$
\xy  0;/r.85pc/:
  (-16,8)*=0{}="a"; (-12,8)*=0{}="b";
  (-8,8)*=0{}="c"; (-4,8)*=0{}="d";
  (0,8)*=0{}="e"; (4,8)*=0{}="f";
  (8,8)*=0{}="g"; (12,8)*=0{}="h";
  (16,8)*=0{}="i";
  (-6,4) *{\sim}; (6,4) *{\sim};
  (-10,6)*=0{\bullet}="v1"; (-12,4)*=0{\bullet}="v2";
  (-12,2)*{\bullet}="v3";
  (-12,0)*{}="v4";
  (10,6)*=0{\bullet}="v6"; (12,4)*=0{\bullet}="v7";
  (12,2)*{\bullet}="v8";
  (12,0)*{}="v9";
  (0,4)*=0{\bullet}="va"; (0,0)*=0{}="vb";
  (0,2)*=0{\bullet}="v10";
   \ar@{};
  "a" ;"v2" \ar@{-};"a" ;"v2" \ar@{-};
 "b" ;"v1" \ar@{-};
 "c" ;"v1" \ar@{-};
 "v1" ;"v2" \ar@{-};
 "v2" ;"v3" \ar@{-};
 "v3" ;"v4" \ar@{-};
 "d" ;"va" \ar@{-};
 "e" ;"va" \ar@{-};
 "f" ;"va" \ar@{-};
 "va" ;"vb" \ar@{-};
 "g" ;"v6" \ar@{-};
 "h" ;"v6" \ar@{-};
 "i" ;"v7" \ar@{-};
 "v6" ;"v7" \ar@{-};
 "v7" ;"v8" \ar@{-};
 "v8" ;"v9" \ar@{-};
 "v3" ;"v4" \ar@{=};"v3" ;"v4" \ar@{=};
 "v10" ;"vb" \ar@{=};
 "v8" ;"v9" \ar@{=};
 \endxy
$$
\end{small}
That is, the domain equivalence is generated by the two moves illustrated above.
Therefore we often choose to represent a domain
equivalence class of trees by the unique member with least refinement, that is with
its unpainted subtrees all corollas.
\end{definition}
\begin{definition}
A  \emph{painted
corolla} is
 a painted tree with only one node, of type (6).
\end{definition}

The $n^{th}$ composihedron may be described as the quotient of the  $n^{th}$ multiplihedron under domain equivalence.
This quotient can be performed by applying the equivalence either
to the metric trees which define the multiplihedra in \cite{BV1} or to the combinatorial definition of the multiplihedra,
i.e. to the face poset of the multiplihedra. Here we will follow the latter scheme to unpack the definition of the composihedra
into a recursive description if facet inclusions.

The facets of the $n^{th}$ multiplihedron are of two types: upper facets
whose vertices correspond to sets of related ways of multiplying in the range, and lower facets whose vertices
correspond to sets of related ways of multiplying in the domain. A lower facet is denoted
  ${\cal J}_k(r,s)$ and is a combinatorial copy of
the complex ${\cal J}(r) \times{\cal K}(s).$
Here $r+s-1 = n.$ A vertex of a lower facet represents a way in which $s$ of the points are multiplied in the
domain, and then how the images of their product and of the other $r-1$ points are multiplied in the range.
In the case for which the domain is strictly associative, the copy of
${\cal K}(s)$ here should be replaced by a single point, denoted $\{*\}.$  Thus in the composihedra the lower faces
will have reduced dimension. In fact only certain of them will still be facets.

The recursive definition of the new sequence of polytopes is as follows:
\begin{definition}\label{recur}
The first composihedron denoted $\cal{CK}(1)$ is defined to be the single point $\{*\}.$
It is associated to the painted tree with one leaf, and thus one type (3) internal node.
Assume that the $\cal{CK}(k)$ have been defined for $k=1\dots n-1.$ To $\cal{CK}(k)$
we associate the $k$-leaved painted corolla.
We define an $(n-2)$-dimensional
 $CW$-complex $\partial\cal{CK}(n)$
as follows, and then define $\cal{CK}(n)$ to be the cone on $\partial\cal{CK}(n)$.  Now the top-dimensional
cells of $\partial\cal{CK}(n)$ (facets of $\cal{CK}(n)$) are in
 bijection with the set of  painted trees of two types:
\begin{small}
 $$
\xy (0,0) *{ \text{\emph{upper} trees $u(t;r_1,\dots, r_t) =$ }} \endxy
\xy (0,0) *{
\xy  0;/r.45pc/:
  (-.5,23)*{{}^0}="a"; (1.5,25.25) *{{r_1 \atop \overbrace{~~~}}}; (1.5,23) *{\dots}; (3.5,23)*=0{}="b";
(4.5,23)*=0{}="a2"; (6.5,25.25) *{{r_2 \atop \overbrace{~~~}}}; (6.5,23)  *{\dots};
(8.5,23)*=0{}="b2";
 (12,23)*=0{}="a3"; (14,25.25) *{{r_t \atop \overbrace{~~~}}}; (14,23) *{\dots~~}; (16,23)*{~~^{~n-1}}="b3";
   (2,20)*=0{\bullet}="d";
  (6,20)*=0{\bullet}="e"; (14,20)*=0{\bullet}="f";
 (9,17) *{\dots};
  (8,14)*=0{\bullet}="v2";
  (8,8)*=0{}="v4"; \ar@{};
 "d" ;"v2" \ar@{-}; "d" ;"v2" \ar@{-};
"a" ;"d" \ar@{-};
 "b" ;"d" \ar@{-};
  "a2" ;"e" \ar@{-};
   "b2" ;"e" \ar@{-};
 "a3" ;"f" \ar@{-};
 "b3" ;"f" \ar@{-};
 "e" ;"v2" \ar@{-};
 "f" ;"v2" \ar@{-};
  "v2" ;"v4" \ar@{-};
 \ar@{} ;
"d" ;"v2" \ar@{=};"d" ;"v2" \ar@{=};
 "e" ;"v2" \ar@{=};
 "f" ;"v2" \ar@{=};
 "v2" ;"v4" \ar@{=};
 \endxy }\endxy
 $$
 $$
\xy (0,0) *{ \text{ and \emph{minimal lower} trees $l(k,2) =$ }}\endxy
\xy (0,0) *{
\xy  0;/r.45pc/:
    (-8,22) *{{2 \atop \overbrace{~~~~~~~~~}}};
   (-22,20)*{{}^0}="L";(6,20)*{{}^{n-1}}="R";
  (-14,20)*=0{}="a"; (-10,20)*{{}^{k-1}}="b";
  (-6,20)*=0{}="b2";
  (-2,20)*=0{}="c";
  (-8,14)*=0{\bullet}="v1";
  (-8,8)*=0{}="v3";
  (-8,16)*=0{\bullet}="v0";
  (-15,18) *{\dots};
  (-1,18) *{\dots};
  %
   "a" ;"v1" \ar@{-};"a" ;"v1" \ar@{-};
   "L" ;"v1" \ar@{-};
   "R" ;"v1" \ar@{-};
 "b" ; "v0" \ar@{-};
 "b2" ; "v0" \ar@{-};
 "v0"; "v1" \ar@{-};
 "v1" ; "c" \ar@{-};
  "v1" ;"v3" \ar@{-};
  \ar@{} ;
"v1" ;"v3" \ar@{=};"v1" ;"v3" \ar@{=};
 \endxy } \endxy
$$
\end{small}
There are $2^{n-1}-1$ upper trees, as counted in \cite{multi}.
The index $t$ can range from $2\dots n,$ and $r_i \ge 1.$
The upper facet which we call $\cal{CK}(t;,r_1,\dots,r_t)$ associated with the upper tree
 of identical indexing is a copy of $\cal{K}(t) \times \cal{CK}(r_1)\times \dots\times\cal{CK}(r_t).$
 Here the sum of the $r_i$ is $n.$

There are $n-1$ minimal lower trees. The lower facet which we call $\cal{CK}_k(n-1,2)$ associated with
 the lower tree of identical indexing is a copy of $\cal{CK}(n-1)\times \cal{K}(2)$, that is, a copy of $\cal{CK}(n-1).$  Here
 $k$ ranges from $1$ to $n-1.$
 $B(n)$ is the union of all these facets, with intersections described as follows:

Since the facets are product polytopes, their sub-facets in turn are
products of faces (of smaller associahedra and composihedra) whose dimensions sum to $n-3.$ Each of these
sub-facets thus comes (inductively) with a list of associated trees.
There will always be a unique way of grafting these trees to
construct a painted tree that is a minimal refinement of the upper or minimal
 lower tree associated to the facet in question.
For the sub-facets
of an upper facet the recipe is to paint entirely the $t$-leaved tree associated to a face of $\cal{K}(t)$
and to graft to each of its branches in turn the trees associated to the appropriate faces of  $\cal{CK}(r_1)$ through
$\cal{CK}(r_t)$ respectively.
 A sub-facet of the
 lower facet $\cal{CK}_k(n-1,2)$ inductively comes with an $(n-1)$-leaved associated upper or minimal lower tree $T$.
 The recipe
 for assigning our sub-facet
  an $n$-leaved
 minimal refinement of the $n$-leaved minimal lower tree $l(k,2)$ is to graft an unpainted 2-leaved tree to the
 $k^{th}$
 leaf of $T.$ See the following Example~\ref{ex1} for this pictured in low dimensions.

 The intersection of two facets in $\partial\cal{CK}(n)$ consists of the sub-facets of each which
have associated trees that are domain equivalent.
 Then $\cal{CK}(n)$ is defined to be the cone on $\partial\cal{CK}(n).$
 To $\cal{CK}(n)$ we assign the painted corolla of $n$ leaves.
\end{definition}

\begin{example}\label{ex1}

$$
\cal{CK}(1) = \bullet~\hspace{.05in}
\xy 0;/r.25pc/:
    (14,6)*=0{}="e";
  (14,2)*=0{\bullet}="v5"; (14,-3)*=0{}="v6" \ar@{};
  "e"; "v5" \ar@{-} ;"e"; "v5" \ar@{-} \ar@{};
 "v5"; "v6" \ar@{-} ;"v5"; "v6" \ar@{-} \ar@{};
 "v5"; "v6" \ar@{=};"v5"; "v6" \ar@{=};
  \endxy
$$

Here is $\cal{CK}(2)$ with the upper facet $\cal{K}(2) \times \cal{CK}(1)\times \cal{CK}(1)$ on the
left and the lower facet $\cal{CK}(1)\times \cal{K}(2)$ on the right.

\begin{small}
$$
\xy 0;/r.55pc/: (-12,8)*=0{}="a"; (-4,8)*=0{}="b";
  (4,8)*=0{}="e"; (12,8)*=0{}="f";
  (-2,4)*=0{}="c"; (2,4)*=0{}="d";
(-10,6)*=0{\bullet}="va";
 (-6,6)*=0{\bullet}="vb";
(-8,4)*=0{\bullet}="v1";
 (0,2)*=0{\bullet}="v2";
  (8,4)*=0{\bullet}="v3";
  (8,2)*=0{\bullet}="v4";
  (-8,0)*=0{}="r1";
(0,0)*=0{}="r2";
 (8,0)*=0{}="r3";
 (-8,-1)*=0{\bullet}="lp";
 (0,-1)*=0{}="cp";
 (8,-1)*=0{\bullet}="rp";
"a" ;"v1" \ar@{-}; "a" ;"v1" \ar@{-};
 "b" ;"v1" \ar@{-};
 "v1" ;"r1" \ar@{-};
 "c" ;"v2" \ar@{-};
 "d" ;"v2" \ar@{-};
 "v2" ;"r2" \ar@{-};
 "e" ;"v3" \ar@{-};
 "f" ;"v3" \ar@{-};
 "v3" ;"v4" \ar@{-};
 "v4" ;"r3" \ar@{-};
 "lp" ;"rp" \ar@{-};
 "va" ;"v1" \ar@{=}; "va" ;"v1" \ar@{=};
 "vb" ;"v1" \ar@{=};
 "v1" ;"r1" \ar@{=};
 "v2" ;"r2" \ar@{=};
 "v4" ;"r3" \ar@{=};
 \endxy
$$
\end{small}

$$$$
$$$$

And here is the complex $\cal{CK}(3).$ The product structure of the upper facets is listed, and the
two lower facets are named. Note that both lower facets are copies of $\cal{CK}(2)\times\cal{K}(2).$
Notice also how  the sub-facets (vertices) are labeled. For instance, the upper right vertex is labeled by a tree
that could be constructed by grafting three copies of the single leaf painted corolla onto a completely painted binary tree
with three leaves, or by grafting a single leaf painted corolla and a 2-leaf painted corolla onto the leaves of a 2-leaf
completely painted binary tree.

\begin{small}
$$
\xy 0;/r.2pc/:
 (-32,48);(0,48) *=0{}  \ar@{-}; (-32,48);(0,48) *=0{}  \ar@{-};
 (0,44) *={\txt{$^{\cal{K}(3)\times\cal{CK}(1)\times\cal{CK}(1)\times\cal{CK}(1)}$}};
 (0,48);(32,48)*=0{\bullet}  \ar@{-};
   (32,48);(44,24)*=0{}  \ar@{-};
   (32,14) *={^{\cal{K}(2)\times\cal{CK}(1)\times\cal{CK}(2)}};
   (-32,14) *={^{\cal{K}(2)\times\cal{CK}(2)\times\cal{CK}(1)}};
  (44,24); (56,0)*=0{\bullet}  \ar@{-};
   (56,0); (0,-48)*=0{\bullet}  \ar@{-};
   (20,-20) *={\cal{CK}_2(2,2)};
   (-20,-20) *={\cal{CK}_1(2,2)};
     (0,-48); (-56,0)*=0{\bullet}  \ar@{-};
      (-56,0);(-44,24)*=0{}  \ar@{-};
      (-44,24); (-32,48)*=0{\bullet}  \ar@{=};
            (4,4);(4,8) \ar@{=}; (4,4);(4,8) *=0{\bullet}\ar@{-}; (4,4);(4,8) \ar@{-};
      (4,8);(2,12)  \ar@{-}; (4,8); (4,12) \ar@{-}; (4,8); (6,12) \ar@{-};
(   -44 ,   78  )      *=0{}="aa";
 (   -36 ,   78  )      *=0{}="ab";
  (   -32 ,   78  ) *=0{}="ac";
 (   -24 ,   78  )      *=0{}="ad";
  (   -40 ,   70  )      *=0{\bullet}="av1";
   (   -28 ,70  )   *=0{}="av2";
 (   -34 ,   58  )      *=0{\bullet}="av3";
  (   -34 ,   50  )      *=0{}="av4";
(   -30 ,   74  )      *=0{}="ava";
 (   -26 ,   74  )      *=0{}="avb";
  (   -38 ,   74  )*=0{\bullet}="ave";
 (   -42 ,   74  )      *=0{\bullet}="avf";
  (   -38 ,   66  )      *=0{}="avc";
(   -30 ,   66  )   *=0{\bullet}="avd";
 "aa" ;"avc" \ar@{-};"aa" ;"avc" \ar@{-};
 "ab" ;"av1" \ar@{-}  ;
  "ad" ;"avb" \ar@{-} \ar@{};
 "ave";"av1" \ar@{=};"ave";"av1" \ar@{=} ;
 "avf";"av1" \ar@{=} ;
 "av1" ;"av3" \ar@{=} ;
 "avd" ;"av3" \ar@{=} ;
 "av3" ;"av4" \ar@{=} \ar@{};
"avb";"av3" \ar@{-} ;
 "avb";"av3" \ar@{-} ;
 "avc" ;"av3" \ar@{-} ;
 "av3" ;"av4" \ar@{-} ;
 (   24  ,   78  )   *=0{}="ba";
(   32  ,   78  )   *=0{}="bb";
 (   36  ,   78  )   *=0{}="bc";
  (   44  ,   78  )   *=0{}="bd";
   (28  ,   70  )   *=0{}="bv1";
 (   40  ,   70  )   *=0{\bullet}="bv2";
  (   34  ,   58  )*=0{\bullet}="bv3";
 (   34  ,   50  )   *=0{}="bv4";
  (   38  ,   74  )   *=0{\bullet}="bva";
   (   42,   74  )   *=0{\bullet}="bvb";
 (   30  ,   74  )   *=0{}="bve";
  (   26  ,   74  )   *=0{}="bvf";
   (30  ,   66  )   *=0{\bullet}="bvc" ;
 (   38  ,   66  )   *=0{}="bvd" ;
 "ba" ;"bvc" \ar@{-};"ba" ;"bvc" \ar@{-};
  "bc" ;"bva" \ar@{-} ;
 "bd" ;"bvb" \ar@{-} \ar@{};
 "bva";"bv2" \ar@{=} ;"bva";"bv2" \ar@{=} ;
 "bvb";"bv2" \ar@{=} ;
 "bv2";"bv3" \ar@{=} ;
 "bvc" ;"bv3" \ar@{=} ;
 "bv3" ;"bv4" \ar@{=} \ar@{};
 "bva";"bv2" \ar@{-} ;"bva";"bv2" \ar@{-} ;
 "bvb";"bv3" \ar@{-} ;
 "bvc" ;"bv3" \ar@{-} ;
 "bv3" ;"bv4" \ar@{-} ;
(   -78 ,   20  )            *=0{}="ca";
 (   -70 ,   20  )            *=0{}="cb";
  (   -66 ,   20  )*=0{}="cc";
 (   -58 ,   20  )            *=0{}="cd";
  (   -74 ,   12  )*=0{\bullet}="cv1";
 (   -62 ,   12  )            *=0{}="cv2";
  (   -68 ,   0   )*=0{\bullet}="cv3";
 (   -68 ,   -8  )   *=0{}="cv4";
  (   -64 ,   16  )            *=0{}="cva";
   (-60 ,   16  )            *=0{}="cvb";
 (   -72 ,   16  )      *=0{}="cve";
  (   -76 ,   16  )*=0{}="cvf";
 (   -72 ,   8   )   *=0{\bullet}="cvc" ;
  (   -64 ,   8   )*=0{\bullet}="cvd" ;
 "ca" ;"cv1" \ar@{-};"ca" ;"cv1" \ar@{-};
 "cv1" ;"cvc" \ar@{-};
 "cb" ;"cv1" \ar@{-}  ;
  "cd" ;"cvb" \ar@{-} \ar@{};
 "cvc" ;"cv3" \ar@{=} ;
 "cvc" ;"cv3" \ar@{=} ;
 "cvd" ;"cv3" \ar@{=} ;
 "cv3" ;"cv4" \ar@{=} \ar@{};
"cvb";"cv3" \ar@{-} ;
 "cvb";"cv3" \ar@{-} ;
 "cvc" ;"cv3" \ar@{-} ;
 "cv3" ;"cv4" \ar@{-} ;
(   58  ,   20  )      *=0{}="da";
 (   66  ,   20  )      *=0{}="db";
  (   70  ,   20  )*=0{}="dc";
 (   78  ,   20  )      *=0{}="dd";
  (   62  ,   12  )      *=0{}="dv1";
   (   74  ,   12)   *=0{\bullet}="dv2";
 (   68  ,   0   )      *=0{\bullet}="dv3";
  (   68  ,   -8  )*=0{}="dv4";
 (   72  ,   16  )      *=0{}="dva";
  (   76  ,   16  )      *=0{}="dvb";
   (   64  ,   16)      *=0{}="dve";
 (   60  ,   16  )      *=0{}="dvf";
  (   64  ,   8   )      *=0{\bullet}="dvc" ;
(   72  ,   8   )   *=0{\bullet}="dvd" ;
 "da" ;"dvc" \ar@{-};"da" ;"dvc" \ar@{-};
  "dc" ;"dva" \ar@{-} ;
 "dd" ;"dvb" \ar@{-} \ar@{};
"dvd" ;"dv3" \ar@{=} ;"dvd" ;"dv3" \ar@{=} ;
 "dvc" ;"dv3" \ar@{=} ;
 "dv3" ;"dv4" \ar@{=} \ar@{};
 "dva";"dv2" \ar@{-} ;"dva";"dv2" \ar@{-} ;
 "dvb";"dv2" \ar@{-} ;
 "dv2";"dvd" \ar@{-} ;
  "dvd";"dv3" \ar@{-} ;
 "dvc" ;"dv3" \ar@{-} ;
 "dv3" ;"dv4" \ar@{-} ;
(   -38 ,   -48 )   *=0{}="ea";
 (   -30 ,   -48 )   *=0{}="eb";
  (   -26 ,   -48 )   *=0{}="ec";
   (-18 ,   -48 )   *=0{}="ed";
 (   -34 ,   -56 )   *=0{\bullet}="ev1";
  (   -22 ,   -56 )*=0{}="ev2";
 (   -28 ,   -68 )   *=0{\bullet}="ev3";
  (   -28 ,   -76 )   *=0{}="ev4";
   (   -28 ,-72 )   *=0{\bullet}="ev5";
 (   -24 ,   -52 )   *=0{}="eva";
  (   -20 ,   -52 )   *=0{}="evb";
   (-32 ,   -52 )   *=0{}="eve";
 (   -36 ,   -52 )   *=0{}="evf";
  (   -32 ,   -60 )   *=0{}="evc" ;
   (-24 ,   -60 )   *=0{}="evd" ;
(-14,-56) *={\sim};
 "ea" ;"evc" \ar@{-};"ea" ;"evc" \ar@{-};
 "eb" ;"ev1" \ar@{-}  ;
  "ed" ;"evb" \ar@{-} \ar@{};
   "ev5" ;"ev4" \ar@{=}; "ev5" ;"ev4" \ar@{=} \ar@{};
"evb";"ev3" \ar@{-} ;
 "evb";"ev3" \ar@{-} ;
 "evc" ;"ev1" \ar@{-} ;
 "ev1" ;"ev3" \ar@{-} ;
 "ev3" ;"ev5" \ar@{-} ;
 "ev5" ;"ev4" \ar@{-} ;
  (   18  ,   -48 )   *=0{}="fa";
(   26  ,   -48 )   *=0{}="fb";
 (   30  ,   -48 )   *=0{}="fc";
  (   38  ,   -48 )   *=0{}="fd";
   (22  ,   -56 )   *=0{}="fv1";
 (   34  ,   -56 )   *=0{\bullet}="fv2";
  (   28  ,   -68 )*=0{\bullet}="fv3";
 (   28  ,   -76 )   *=0{}="fv4";
  (   28  ,   -72 )   *=0{\bullet}="fv5";
   (   32,   -52 )   *=0{}="fva";
 (   36  ,   -52 )   *=0{}="fvb";
  (   24  ,   -52 )   *=0{}="fve";
   (   20,   -52 )   *=0{}="fvf";
 (   24  ,   -60 )   *=0{}="fvc" ;
  (   32  ,   -60 )   *=0{}="fvd" ;
(14,-56) *={\sim};
 "fa" ;"fvc" \ar@{-};"fa" ;"fvc" \ar@{-};
  "fc" ;"fva" \ar@{-} ;
 "fd" ;"fvb" \ar@{-} \ar@{};
  "fv5" ;"fv4" \ar@{=}; "fv5" ;"fv4" \ar@{=} \ar@{};
 "fva";"fv2" \ar@{-} ;"fva";"fv2" \ar@{-} ;
  "fvb";"fv2" \ar@{-} ;
 "fv2";"fv3" \ar@{-} ;
 "fvc" ;"fv1" \ar@{-} ;
 "fv1" ;"fv3" \ar@{-} ;
 "fv3" ;"fv5" \ar@{-} ;
 "fv5" ;"fv4" \ar@{-} ;
 (-8,66)*=0{}="a"; (8,66)*=0{}="b";
 (0,66)*=0{}="c";
  (-4,62)*=0{\bullet}="va";
  (0,62)*=0{\bullet}="vc";
 (4,62)*=0{\bullet}="vb";
(0,58)*=0{\bullet}="v1";
  (0,50)*=0{}="r1";
"a" ;"v1" \ar@{-}; "a" ;"v1" \ar@{-};
  "c" ; "v1" \ar@{-};
 "b" ;"v1" \ar@{-};
 "v1" ;"r1" \ar@{-};
 "va" ;"v1" \ar@{=}; "va" ;"v1" \ar@{=};
 "vb" ;"v1" \ar@{=};
 "vc" ; "v1" \ar@{=};
 "v1" ;"r1" \ar@{-};
(45,43)*=0{}="a"; (61,43)*=0{}="b";
 (53,43)*=0{}="c";
  (49,39)*=0{\bullet}="va";
 (57,39)*=0{\bullet}="vb";
(53,35)*=0{\bullet}="v1";
  (53,27)*=0{}="r1";
"a" ;"v1" \ar@{-}; "a" ;"v1" \ar@{-};
  "c" ; "vb" \ar@{-};
 "b" ;"v1" \ar@{-};
 "v1" ;"r1" \ar@{-};
 "va" ;"v1" \ar@{=}; "va" ;"v1" \ar@{=};
 "vb" ;"v1" \ar@{=};
 "v1" ;"r1" \ar@{-};
(-45,43)*=0{}="a";
 (-61,43)*=0{}="b";
 (-53,43)*=0{}="c";
  (-49,39)*=0{\bullet}="va";
 (-57,39)*=0{\bullet}="vb";
(-53,35)*=0{\bullet}="v1";
  (-53,27)*=0{}="r1";
"a" ;"v1" \ar@{-}; "a" ;"v1" \ar@{-};
  "c" ; "vb" \ar@{-};
 "b" ;"v1" \ar@{-};
 "v1" ;"r1" \ar@{-};
 "va" ;"v1" \ar@{=}; "va" ;"v1" \ar@{=};
 "vb" ;"v1" \ar@{=};
 "v1" ;"r1" \ar@{-};
 (-6,-50)*=0{}="a"; (6,-50)*=0{}="b";
 (0,-50)*=0{}="c";
    (0,-54)*=0{\bullet}="vc";
 (0,-58)*=0{\bullet}="v1";
  (0,-66)*=0{}="r1";
"a" ;"vc" \ar@{-}; "a" ;"vc" \ar@{-};
  "c" ; "vc" \ar@{-};
 "b" ;"vc" \ar@{-};
 "v1" ;"r1" \ar@{-};
  \ar@{-};
 "vc" ; "v1" \ar@{=};
 "v1" ;"r1" \ar@{-};
(49,-19)*=0{}="a"; (65,-19)*=0{}="b";
 (57,-19)*=0{}="c";
  (53,-23)*=0{}="va";
   (61,-23)*=0{\bullet}="vb";
(57,-27)*=0{\bullet}="v1";
  (57,-35)*=0{}="r1";
"a" ;"v1" \ar@{-}; "a" ;"v1" \ar@{-};
  "c" ; "vb" \ar@{-};
 "b" ;"v1" \ar@{-};
 "v1" ;"r1" \ar@{-};
 "va" ;"v1" \ar@{-}; "va" ;"v1" \ar@{-};
 "vb" ;"v1" \ar@{=};
  "v1" ;"r1" \ar@{-};
(-49,-19)*=0{}="a"; (-65,-19)*=0{}="b";
 (-57,-19)*=0{}="c";
  (-53,-23)*=0{}="va";
   (-61,-23)*=0{\bullet}="vb";
(-57,-27)*=0{\bullet}="v1";
  (-57,-35)*=0{}="r1";
"a" ;"v1" \ar@{-}; "a" ;"v1" \ar@{-};
  "c" ; "vb" \ar@{-};
 "b" ;"v1" \ar@{-};
 "v1" ;"r1" \ar@{-};
 "va" ;"v1" \ar@{-}; "va" ;"v1" \ar@{-};
 "vb" ;"v1" \ar@{=};
  "v1" ;"r1" \ar@{-};
 \endxy
$$
\end{small}
\end{example}

\begin{remark}
The total number of facets of the $n^{th}$ composihedron is thus $2^{n-1}+n-2.$ Therefore the number
of facets of $\cal{CK}(n+1)$ is $2^n+n-1 = 0,2,5,10,19,36,\dots.$  This is sequence A052944 in the OEIS.
This sequence also gives the number of vertices of a $n$-dimensional cube with one truncated corner.
Now the relationship between the polytopes in Figure 1 can be algebraically stated as an equation:
$$
\left(2^n+n-1\right)+\left(\frac{(n+1)(n+2)}{2}-1\right)-\left(\frac{n(n+1)}{2}+2^n-1\right) = 2n
$$
That is:
$$
|\{\text{ facets of } \cal{CK}(n+1) \}| +|\{\text{ facets of } \cal{K}(n+2) \}|-|\{\text{ facets of } \cal{J}(n+1) \}|=|\{\text{ facets of }C(n) \}|
$$
where $C(n)$ is the $n$-dimensional cube.
\end{remark}
\begin{remark}
                 Consider the existence of facet inclusions:
$\cal{K}(k) \times (\cal{CK}(j_1) \times \dots \times\cal{CK}(j_k)) \to \cal{CK}(n)$
where $n$ is the sum of the $j_i$. This is the left module structure: the composihedra together form a left operad module
over the operad of spaces formed by the associahedra. Notice that the
description of the assignation of trees to sub-facets (via grafting)  in the definition above guarantees the
module axioms.
\end{remark}

\begin{remark}
By definition, the $n^{th}$ composihedron is seen to be the quotient of the $n^{th}$
 multiplihedron under domain equivalence. Recall that the lower facets of $\cal{J}(n)$ are
 equivalent to copies of $\cal{J}(r)\times\cal{K}(s).$
 The actual quotient is achieved by identifying any two points $(a,b)\sim (a,c)$ in a lower facet,
 where $a$ is a point of $\cal{J}(r)$ and $b,c$ are points in $\cal{K}(s)$.
An alternate definition of the composihedra describes
$\cal{CK}(n)$ as the polytope achieved by taking the $n^{th}$
multiplihedron  ${\cal J}(n)$  and
sequentially collapsing certain facets.
 Any cell in ${\cal J}(n)$ whose 1-skeleton is entirely made up of edges that correspond
to images of domain associations  $f((ab)c)\to f(a(bc))$ is collapsed to a single vertex.  In the resulting complex
there will be redundant cells. First any cell with exactly two
vertices will be collapsed to an edge between them. Then any cell with
1-skeleton $S^1$ will be collapsed to a 2-disk spanning that boundary. This
process continues inductively until any cell with $(n-3)$-skeleton $S^{(n-3)}$
is collapsed to  a $(n-2)$-disk spanning that boundary.
For a picture of this see the upper left projection of Figure 1, where the small pentagon and two rectangles on the back
of  $\cal{J}(4)$ collapse to a vertex and two edges respectively.
\end{remark}

\begin{remark}
  In \cite{umble} there is described a projection $\pi$ from the $n^{th}$
 permutohedron $\cal{P}(n)$ to $\cal{J}(n),$ and so there follows a composite projection from
 $\cal{P}(n)\to \cal{CK}(n).$ For comparison sake, recall that in the case of a strictly associative range the
 multiplihedra become the associahedra, in fact the quotient under range equivalence of $\cal{J}(n)$ is $\cal{K}(n+1).$
 The implied projection of this quotient composed with $\pi$ yields a new projection from $\cal{P}(n)\to \cal{K}(n+1).$
 Saneblidze and Umble describe a very different projection from $\cal{J}(n)$ to $\cal{K}(n+1).$  When
 composed with $\pi$ this yields a (different) projection from $\cal{P}(n)$ to $\cal{K}(n+1)$ that
  is  shown in \cite{umble} to be precisely the
 projection $\theta$ described in \cite{Tonks}.
\end{remark}

\begin{remark}
There is a bijection between
the $0$-cells, or vertices, of $\cal{CK}(n)$ and the domain equivalence classes of $n$-leaved painted binary trees.
This follows from the recursive construction, since the 0-cells must be associated to completely refined painted trees.
However, the domain equivalent trees must be assigned to the same vertex since domain equivalence determines
intersection of
sub-facets.
Recall that we can also label the vertices of $\cal{CK}(n)$ by applications of an $A_{\infty}$ function $f$ to
$n$-fold products in a topological monoid.
\end{remark}

\begin{remark}
As defined in \cite{sta2} an $A_n$ map between $A_n$ spaces $f:X \to Y$ is described by an action
$\cal{J}(n)\times X^n \to Y.$ If the space $X$ is a topological monoid then
we may equivalently describe $f:X \to Y$ by simply replacing the action of $\cal{J}(n)$ with an action of
 $\cal{CK}(n)$ in the
definition of $A_n$ map.
\end{remark}

\section{Vertex Combinatorics}\label{four}
 Now we mention results regarding the counting of
the vertices of the composihedra. Recall that vertices correspond bijectively to domain equivalence classes of
painted binary trees.

\begin{theorem}
 The number of vertices $a_n$ of the $n^{th}$ composihedron is given recursively by:
$$
a_n = 1 +  \sum_{i=1}^{n-1}a_ia_{n-i}
$$
where $a_0 = 0.$
\end{theorem}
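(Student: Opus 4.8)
The plan is to establish the recursion by analyzing how an $n$-leaved painted binary tree decomposes at its root, using the bijection (stated in an earlier remark) between vertices of $\cal{CK}(n)$ and domain equivalence classes of $n$-leaved painted binary trees. Every such tree has a root node of one of the three binary types (1), (2), or (3) from the recursive definition. I would split the count according to whether the root is the type-(3) bivalent painting node or a trivalent node, and within the trivalent case according to whether multiplication occurs in the range (type (2)) or the domain (type (1)).

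First I would handle the type-(3) case: a painted binary tree whose root node is the bivalent painting node is obtained by applying $f$ to a \emph{completely unpainted} binary tree with $n$ leaves. Since the domain is strictly associative, the domain equivalence relation collapses \emph{all} unpainted binary trees on $n$ leaves into a single class. Hence this case contributes exactly $1$ to $a_n$, which accounts for the leading ``$1+$'' in the recursion. Next I would handle the type-(2) case, where the root is a painted trivalent node splitting the leaves into a left subtree with $i$ leaves and a right subtree with $n-i$ leaves, each of which is itself a painted binary tree counted by $a_i$ and $a_{n-i}$ respectively. Crucially, because the root painting node sits above both subtrees, any domain collapse must occur strictly within one of the two subtrees, so domain equivalence classes of the whole tree correspond bijectively to pairs of domain equivalence classes of the subtrees; this gives the product $a_i a_{n-i}$, and summing over the split point $i$ from $1$ to $n-1$ yields $\sum_{i=1}^{n-1} a_i a_{n-i}$.

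The one subtlety I expect to be the main obstacle is verifying that the type-(1) root case introduces no further terms and no double-counting. A tree with a type-(1) (unpainted trivalent) root is \emph{not} in least-refinement form under domain equivalence: collapsing the unpainted root edges merges it into a tree whose root is higher up, and by the moves generating domain equivalence (collapses of internal unpainted edges with no adjacent painted edges) every such tree is domain equivalent to one already counted in either the type-(3) case or as a subtree contribution in the type-(2) case. I would argue this carefully by choosing the canonical representative of each class to be the member of least refinement, so that the unpainted portion below the painting boundary is a single corolla; this guarantees each class is counted exactly once and that the three cases are genuinely disjoint. The base case $a_0 = 0$ together with $a_1 = 1$ (the single point $\cal{CK}(1)$, one painted corolla) then anchors the induction, and one checks the first few values $a_1=1, a_2=2, a_3=4, a_4=9$ agree with the vertex counts of $\cal{CK}(1)$ through $\cal{CK}(4)$ given earlier.
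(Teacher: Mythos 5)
Your decomposition is essentially the paper's own proof. The paper first passes to a bijection between domain equivalence classes and binary trees whose leaves carry positive integer weights summing to $n$ (this is exactly your device of taking least-refined representatives, with each unpainted corolla recorded by its leaf count), and then counts precisely as you do: one class whose painted part is a single leaf---your type-(3)-root case---plus, for each split $i+(n-i)$, a pair of smaller classes grafted onto a painted trivalent root, giving $\sum_{i=1}^{n-1} a_i a_{n-i}$.

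Two corrections are needed, though neither breaks the argument. First, the case you single out as the main obstacle---a type-(1) root---is empty for a more basic reason than the one you give: a painted binary tree is by definition painted starting at the root edge, so the node adjacent to the root is of type (2) or type (3), never type (1); there are simply no such trees to dispose of. Moreover, your proposed resolution misdescribes domain equivalence: its generating moves collapse unpainted internal edges only and never change which edges are painted, so a tree with an unpainted root node (if one existed) could not be domain equivalent to any tree counted in your other two cases. Your conclusion that this case contributes nothing is right only because the case is vacuous, not because of the merging you describe. Second, your numerical check is wrong: the recursion being proved gives $a_3 = 1 + a_1a_2 + a_2a_1 = 5$ and $a_4 = 1 + a_1a_3 + a_2a_2 + a_3a_1 = 15$, and these are the values exhibited in the paper ($\cal{CK}(3)$ is the pentagon, and $\cal{CK}(4)$ has fifteen vertices). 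The values $4$ and $9$ you quote are Motzkin numbers; they satisfy neither the recursion nor the geometry, and had you actually run that check it would have appeared to refute a proof that is in fact correct.
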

\begin{proof}By a weighted tree we refer to a tree with a real number assigned to each of its leaves. The total weight of
the tree is the sum of the weights of its leaves.
The set of domain equivalence classes of painted binary trees with $n$-leaves is in bijection with the
set of  binary weighted trees of total weight $n$ where leaves have positive integer weights.
This  is evident from the representation of a
domain equivalence class by its least refined member.
 Here is a picture that illustrates the general case of the bijection.
\begin{footnotesize}
 $$
\xy  0;/r.25pc/:
  (-10,22)*=0{}="a"; (-2,22)*=0{}="b"; (-6,22)*=0{}="b2";
  (2,22)*=0{}="c"; (10,22)*=0{}="d"; (7,22)*=0{}="c2"; (9,19)*=0{\bullet}="c3";
  (-6,12)*=0{\bullet}="v1"; (6,12)*=0{\bullet}="v2";
  (0,0)*=0{\bullet}="v3";
  (0,-7)*=0{}="v4";
  (4,16)*=0{\bullet}="va";
  (8,16)*=0{\bullet}="vb";
  (-4,8)*=0{\bullet}="vc" \ar@{};
 "a" ;"v1" \ar@{-};"a" ;"v1" \ar@{-};
 "v1" ;"vc" \ar@{-};
 "b" ;"v1" \ar@{-}  ;
 "b2" ;"v1" \ar@{-}  ;
 "c2" ;"c3" \ar@{-};
 "c" ;"va" \ar@{-} ;
 "d" ;"vb" \ar@{-} \ar@{};
 "va";"v2" \ar@{=} ;"va";"v2" \ar@{=} ;
 "vb";"v2" \ar@{=} ;
 "v2" ;"v3" \ar@{=};
 "vc" ;"v3" \ar@{=} ;
 "v3" ;"v4" \ar@{=} \ar@{};
 %
 %
 "va";"v2" \ar@{-} ;"va";"v2" \ar@{-} ;
 "vb";"v3" \ar@{-} ;
 "vc" ;"v3" \ar@{-} ;
 "v3" ;"v4" \ar@{-} ;
 \endxy
\rightleftharpoons
 \xy  0;/r.25pc/:
  (-10,20)*=0{}="a"; (-2,20)*=0{}="b"; (-6,20)*=0{}="b2";
  (2,20)*=0{}="c"; (10,20)*=0{}="d"; (6,20)*=0{}="c2";
  (-6,12)*{\txt{3 \\\\}}="v1"; (6,12)*{\bullet}="v2";
  (0,0)*=0{\bullet}="v3";
  (0,-7)*=0{}="v4";
  (3,18)*{\txt{1 \\\\}}="va";
  (9,18)*{\txt{2 \\\\}}="vb";
  (-4,8)*=0{}="vc" \ar@{-};
  "v1" ;"vc" \ar@{-};
  "va";"v2" \ar@{-} ;"va";"v2" \ar@{-} ;
 "vb";"v3" \ar@{-} ;
 "vc" ;"v3" \ar@{-} ;
 "v3" ;"v4" \ar@{-} ;
 \endxy
$$
\end{footnotesize}
 There is one weighted 1-leaved tree
with total weight $n$. Now we count the binary trees
with total weight $n$  that have at least one trivalent node. Each of these
consists of a choice of two binary subtrees whose root is the initial trivalent node, and
whose weights must sum to $n.$ Thus we sum over the ways that $n$ can be split into two natural
numbers.
\end{proof}
\begin{remark}
This formula gives the sequence which begins: $$0,1, 2, 5, 15, 51, 188, 731, 2950, 12235\dots.$$ It is sequence
A007317 of the On-line Encyclopedia of integer sequences.
This formula for the sequence was originally stated by Benoit Cloitre.
\end{remark}

The recursive formula  above yields the
equation
$$A(x) = \frac{x}{1-x} + (A(x))^2$$
where $A(x)$ is the ordinary generating function of the sequence $a_n$ above. Thus by use
of the quadratic formula we have
 $$A(x) = \frac{x}{1-x}c(\frac{x}{1-x}).$$
where $c(x) = \frac{1-\sqrt{1-4x}}{2x}$ is the generating function of the Catalan numbers.
This formula was originally derived by Emeric Deutsch from a comment of Michael Somos.

Now if we want to generate the sequence $\{a_{n+1}\}_{n=0}^{\infty} = 1,2,5,15,51,\dots,$ then we must
divide our generating function by $x$, to get $\frac{1}{1-x}c(\frac{x}{1-x}).$ This we recognize as the
generating function of the binary transform of the the Catalan numbers, from the definition of binary transform
in \cite{Barry}. Therefore a direct formula for the sequence is given by
$$a_{n+1} = \sum_{k=0}^n {n\choose k}C(k)$$ where $C(n)$ are the Catalan numbers.

\begin{remark}
This sequence also counts the non-commutative non-associative partitions of
    $n$.
     Two other combinatorial items that this sequence enumerates are: the number
    of Schroeder paths (i.e. consisting of steps
                $U=(1,1),D=(1,-1),H=(2,0)$ and never going below the x-axis) from
               $(0,0)$ to $(2n-2,0)$, with no peaks at even level; and the
              number of tree-like polyhexes (including the non-planar \emph{helicenic} polyhexes) with $n$ or fewer
               cells \cite{DS}.
\end{remark}

\section{Convex hull realizations}\label{five}

In \cite{loday} Loday gives an algorithm for taking the binary trees with $n$ leaves and finding
for each an extremal point in {\bf R}$^{n-1}$; together whose convex hull is ${\cal K}(n),$ the
$(n-2)$-dimensional associahedron. Note that Loday writes formulas with the convention that the
number of leaves is $n+1,$ where we instead always use $n$ to refer to the number of leaves. Given
a (non-painted) binary $n$-leaved tree $t,$ Loday arrives at a point $M(t)$ in {\bf R}$^{n-1}$ by
calculating a coordinate from each trivalent node. These are ordered left to right based upon the
ordering of the leaves from left to right. Following Loday we number the leaves $0,1,\dots,n-1 $
and the nodes $1,2,\dots,n-1.$ The $i^{th}$ node is ``between'' leaf $i-1$ and leaf $i$ where
``between'' might be described to mean that a rain drop falling between those leaves would be
caught at that node. Each trivalent node has a left and right branch, which each support a subtree.
To find the Loday coordinate for the  $i^{th}$ node we take the product of the number of leaves of
the left subtree ($l_i$) and the number of leaves of the right subtree ($r_i$) for that node. Thus
$M(t) = (x_1, \dots x_{n-1})$ where $x_i = l_ir_i$. Loday proves that the convex hull of the points
thus calculated for all $n$-leaved binary trees  is the $n^{th}$ associahedron. He also shows that
the points thus calculated all lie in the $n-2$ dimensional affine hyperplane $H$ given by the
equation $x_1+\dots+x_{n-1} = S(n-1) = {1\over 2}n(n-1).$

In \cite{multi} we adjust Loday's algorithm to apply to painted binary trees as described above, with only nodes of
type (1), (2), and (3), by choosing a number $q \in (0,1).$ Then given a painted binary tree $t$
with $n$ leaves we calculate a point $M_q(t)$ in {\bf R}$^{n-1}$ as follows: we begin by finding
the coordinate for each trivalent node from left to right given by Loday's algorithm, but if the
node is of type (1) (unpainted, or colored by the domain) then its new coordinate is found by
further multiplying its Loday coordinate by $q$.  Thus
$$M_q(t) = (x_1, \dots x_{n-1}) \text{ where } x_i = \begin{cases}
    ql_ir_i,& \text{if node $i$ is type (1)} \\
    l_ir_i,& \text{if node $i$ is type (2).}
\end{cases} $$
Note that whenever we speak of the numbered nodes ($1,\dots, n-1$  from left to right) of a binary
tree, we are referring only to the trivalent nodes, of type (1) or (2). For an example, let us
calculate the point in {\bf R}$^3$ which corresponds to the 4-leaved tree:

\begin{small}
 $$ t =
\xy  0;/r.25pc/:
  (-10,20)*=0{}="a"; (-2,20)*=0{}="b";
  (2,20)*=0{}="c"; (10,20)*=0{}="d";
  (-6,12)*=0{\bullet}="v1"; (6,12)*=0{\bullet}="v2";
  (0,0)*=0{\bullet}="v3";
  (0,-7)*=0{}="v4";
  (4,16)*=0{\bullet}="va";
  (8,16)*=0{\bullet}="vb";
  (-4,8)*=0{\bullet}="vc" \ar@{};
 "a" ;"v1" \ar@{-};"a" ;"v1" \ar@{-};
 "v1" ;"vc" \ar@{-};
 "b" ;"v1" \ar@{-}  ;
 "c" ;"va" \ar@{-} ;
 "d" ;"vb" \ar@{-} \ar@{};
 "va";"v2" \ar@{=} ;"va";"v2" \ar@{=} ;
 "vb";"v2" \ar@{=} ;
 "v2" ;"v3" \ar@{=};
 "vc" ;"v3" \ar@{=} ;
 "v3" ;"v4" \ar@{=} \ar@{};
 %
 %
 "va";"v2" \ar@{-} ;"va";"v2" \ar@{-} ;
 "vb";"v3" \ar@{-} ;
 "vc" ;"v3" \ar@{-} ;
 "v3" ;"v4" \ar@{-} ;
 \endxy
$$
\end{small}

Then $M_q(t) = (q, 4, 1). $

Now we turn to consider the case when in the  algorithm  described above we let $q=0.$
$$M_0(t) = (x_1, \dots x_{n-1}) \text{ where } x_i = \begin{cases}
    0,& \text{if node $i$ is type (1)} \\
    l_ir_i,& \text{if node $i$ is type (2).}
\end{cases} $$

For the same tree $t$ in the above example we have $M_0(t) = (0, 4, 1). $
\begin{lemma}\label{zero}
 If (painted binary) tree $t$ is domain equivalent to $t'$ then $M_0(t) = M_0(t').$
That
is, each domain equivalence class of binary painted trees contributes exactly one point.
\end{lemma}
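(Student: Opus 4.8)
The plan is to reduce to a single generating move and then observe that $M_0$ is blind to exactly the data such a move can alter. Since domain equivalence is generated by the two local moves pictured just after its definition, and since equality of $M_0$-values is transitive, it suffices to treat the case in which $t$ and $t'$ differ by a single move. Each such move is a reassociation $((\cdot\,\cdot)\,\cdot)\leftrightarrow(\cdot\,(\cdot\,\cdot))$ performed on an internal unpainted edge both of whose endpoints are type (1) nodes; in particular it takes place entirely inside one maximal unpainted subtree $U$, and it alters neither the painted part of the tree nor the left-to-right order of the leaves.

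First I would record the two structural facts that drive the argument. By Loday's numbering, gap $i$ (between leaf $i-1$ and leaf $i$) is assigned to the unique trivalent node whose left subtree has rightmost leaf $i-1$ and whose right subtree has leftmost leaf $i$; the bivalent type (3) nodes are never numbered, so every gap is caught by a trivalent node of type (1) or (2). I would then classify the gaps: since every leaf edge is unpainted, each leaf lies in a unique maximal unpainted subtree. A gap whose two bounding leaves lie in the \emph{same} unpainted subtree is caught by a node interior to that subtree, hence a type (1) node, so its coordinate is $0$ under $M_0$; every other gap is caught by the relevant lowest common ancestor, which lies below the intervening type (3) nodes in the painted region, hence is a type (2) node.

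Next I would verify that a single move preserves $M_0$ coordinate by coordinate. The move only rearranges type (1) nodes inside $U$, so each gap interior to $U$ still receives coordinate $0$ in both $t$ and $t'$; it is irrelevant that the particular interior node catching a given gap may change, since every such node contributes $0$. For a gap caught by a type (2) node $i$, the move changes neither which gap that node catches nor the partition of its leaves into left and right subtrees: reassociation inside $U$ preserves the leaf set of $U$, and hence preserves both $l_i$ and $r_i$, so $x_i = l_i r_i$ is unchanged. Combining the two cases yields $M_0(t)=M_0(t')$ for a single move, and transitivity finishes the proof.

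I do not expect a serious obstacle; the only point requiring care is the bookkeeping that coordinate \emph{positions} line up, i.e. that a single move neither creates nor destroys type (2) nodes nor shifts which gap any type (2) node catches. This holds because the move is confined to $U$ and fixes the leaf order, so the painted region together with the set of gaps it is responsible for is literally identical in $t$ and $t'$. The conceptual content is simply that setting $q=0$ collapses every domain-multiplication coordinate to $0$, while domain equivalence only ever reorganizes domain multiplications.
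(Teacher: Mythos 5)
Your proof is correct and rests on exactly the same observation as the paper's own (one-sentence) proof: under $M_0$ every type (1) node contributes a zero coordinate, while domain equivalence only reorganizes unpainted structure and therefore leaves every type (2) coordinate untouched. The paper dismisses this as ``clear''; your reduction to a single generating move and the coordinate-by-coordinate bookkeeping is precisely the detail left implicit there, not a different argument.
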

\begin{proof}
This is clear from the fact that all the unpainted nodes of a tree contribute a 0 coordinate.
\end{proof}
\begin{theorem}\label{main} The convex hull of all the resulting points $M_0(t)$ for $t$ in the set of $n$-leaved binary
painted trees is the $n^{th}$ composihedron. That is, our convex hull is combinatorially
equivalent to the CW-complex $\cal{CK}(n)$.
\end{theorem}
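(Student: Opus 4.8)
The plan is to argue by induction on $n$, matching the convex hull $P_n := \mathrm{conv}\{M_0(t)\}$ facet-by-facet against the recursive Definition~\ref{recur}; by Lemma~\ref{zero} the defining points are indexed by domain-equivalence classes, equivalently by binary weighted trees of total weight $n$, so the vertex count already agrees. The base cases $n\le 3$ I would check by hand: for $n=3$ the five classes give the points $(0,0),(0,2),(1,2),(2,1),(2,0)$ in $\mathbb{R}^2$, whose hull is the pentagon $\cal{CK}(3)$. For the inductive step I first isolate two transparent families of supporting hyperplanes. Since a type-(2) node contributes $l_ir_i\ge 1$ while a type-(1) node contributes $0$, each inequality $x_k\ge 0$ is valid and its equality set is exactly the set of $M_0(t)$ with $k$-th node unpainted; these $n-1$ coordinate hyperplanes are the candidate lower facets $\cal{CK}_k(n-1,2)$. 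Because Loday's identity $\sum_i l_ir_i=\binom{n}{2}$ holds for every binary shape, the functional $\sum_i x_i$ is maximized precisely on the fully painted trees, so the top face is Loday's associahedron $\cal{K}(n)$ lying in $\{\sum_i x_i=\binom{n}{2}\}$, namely the upper facet $\cal{CK}(n;1,\dots,1)$. The fully-domain tree maps to the origin, giving the cone apex; together with the top facet this shows $\dim P_n=n-1$ and exhibits $P_n$ as a subdivided cone over $\cal{K}(n)$.

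Next I would produce the remaining upper facets $\cal{CK}(t;r_1,\dots,r_t)$ by adapting the facet-defining halfspaces of the multiplihedron from \cite{multi} to the value $q=0$. Writing $M_q(t)=M_0(t)+qD(t)$ with $D(t)$ supported on the type-(1) (unpainted) coordinates, I would determine which of the multiplihedron halfspaces remain valid and facet-defining for all $M_0(t)$, and verify directly that the surviving upper halfspace associated to $u(t;r_1,\dots,r_t)$ has equality set exactly $\{M_0(t'):t' \text{ refines } u(t;r_1,\dots,r_t)\}$. On such a face the top painted node partitions the leaves into consecutive blocks $B_1,\dots,B_t$, and the coordinates split into those recording the within-block subtrees and those recording the top painted bracketing. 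By the inductive hypothesis the within-block data realize $\cal{CK}(r_1),\dots,\cal{CK}(r_t)$, by Loday \cite{loday} the top-bracketing data realize $\cal{K}(t)$, and these coordinate groups vary independently, so the face is combinatorially the product $\cal{K}(t)\times\cal{CK}(r_1)\times\dots\times\cal{CK}(r_t)$. A parallel (contraction) argument identifies each surviving coordinate facet $\{x_k=0\}\cap P_n$ with $\cal{CK}(n-1)$ by collapsing the unpainted pair grafted in the minimal lower tree $l(k,2)$.

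Finally I would verify incidences. Two facets of $P_n$ meet along the common zero set of their functionals; I would check that this intersection is spanned exactly by the $M_0(t)$ whose trees belong to both facet-labels, which by the grafting recipe of Definition~\ref{recur} is precisely the collection whose associated trees are domain equivalent, so the gluing of faces matches the combinatorial description. Counting $(2^{n-1}-1)+(n-1)=2^{n-1}+n-2$ facets, together with an Euler-characteristic check, confirms that no facet is missing or spurious.

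The hard part will be ruling out \emph{over-collapse}: I must show that passing to $q=0$ identifies faces only along domain equivalence and introduces no accidental coincidences. Concretely, I expect the crux to be a separation argument showing simultaneously that (i) distinct weighted binary trees of weight $n$ yield distinct points $M_0(t)$ that remain in genuine convex position, and (ii) exactly the multiplihedron lower facets $\cal{J}_k(r,s)$ with $s=2$ persist as facets while those with $s\ge 3$ drop dimension and disappear. Establishing that the limiting hull is neither too coarse (no extra identifications beyond Lemma~\ref{zero}) nor too fine (the $s\ge3$ lower facets genuinely degenerate) is where the real work lies.
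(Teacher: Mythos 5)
Your overall strategy coincides with the paper's: induct on $n$, bound the hull by the coordinate hyperplanes $x_k=0$ (candidate lower facets) and by the multiplihedron upper hyperplanes specialized to $q=0$, identify each equality set with the refinements of a facet tree, and finish by noting that every binary painted tree refines some facet tree, so every point is a vertex of an identified facet. But there is a genuine gap in your inductive step: your induction hypothesis is stated only for the unweighted points $M_0(t)$, and that hypothesis is too weak to identify the candidate facets. The points lying on $\{x_k=0\}\cap P_n$ are \emph{not} (after projecting away the $k$-th coordinate) the unweighted points of $(n-1)$-leaved trees; they are the points of $(n-1)$-leaved trees computed with leaf weights $(1,\dots,1,2,1,\dots,1)$, because the collapsed unpainted cherry contributes leaf-count $2$ to every Loday product above it. Already for $n=4$, $k=1$, the facet vertices project to $(2,3),(4,1),(0,3),(4,0),(0,0)$, whereas the unweighted realization of $\cal{CK}(3)$ has vertices $(1,2),(2,1),(0,2),(2,0),(0,0)$: combinatorially equivalent pentagons, but proving that equivalence in general \emph{is} the weighted statement, which your hypothesis does not supply. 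The same issue appears in your upper-facet analysis: the ``top-bracketing'' coordinates are $L_iR_i$ where $L_i,R_i$ are sums of the block sizes $r_1,\dots,r_t$, i.e.\ the \emph{weighted} Loday realization of $\cal{K}(t)$ with leaf weights $r_i$, which the unit-weight theorem of \cite{loday} that you cite does not cover. So the induction as you set it up does not close.

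This is precisely why the paper proves a more general result: it introduces points $M^{w_0,\dots,w_{n-1}}_0(t)$ for arbitrary positive integer leaf weights, proves Lemma~\ref{low} and Lemma~\ref{up} in that generality, and runs the induction on the weighted statement, remarking that the weights (``sizes of trees grafted to the leaves'') are necessary for the induction to go through; your own weighted-tree bijection in the vertex count is the natural hint toward this strengthening. Two secondary points. First, your worry (ii), that the multiplihedron lower facets $\cal{J}_k(r,s)$ with $s\ge 3$ might survive, needs no separate separation argument: those faces simply never arise as equality sets of bounding hyperplanes in the $q=0$ picture, and once each of the $2^{n-1}+n-2$ candidate hyperplanes is shown (via the weighted induction) to contain an $(n-2)$-dimensional face, there is no room for further facets. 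Second, your worry (i), convex position of the points, is disposed of exactly as in the paper's last paragraph: each binary painted tree refines some facet tree, so by Lemma~\ref{low} or Lemma~\ref{up} together with the inductive identification of that facet, its point $M_0(t)$ is a vertex of a facet of the hull, hence a vertex of the hull; no independent ``no over-collapse'' argument beyond Lemma~\ref{zero} is required.
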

The proof will follow in section~\ref{proofsec}.
$$$$
Here are all the painted binary trees with 3 leaves, together with their points $M_0(t)\in${\bf
R}$^2.$
\begin{small}
\begin{tabular}{llllll}
&$ M_0\left(  \xy  0;/r.15pc/:
  (-10,14)*{\txt{\\\\}}="aa";
  (-2,14)*{}="ab";
  (2,14)*{}="ac";
  (10,14)*{}="ad";
  (-6,6)*=0{\bullet}="av1";
  (6,6)*=0{}="av2";
  (0,-6)*=0{\bullet}="av3";
  (0,-13)*=0{}="av4";
  (4,10)*=0{}="ava";
  (8,10)*=0{}="avb";
  (-4,10)*=0{\bullet}="ave";
  (-8,10)*=0{\bullet}="avf";
  (-4,2)*=0{}="avc" ;
  (4,2)*=0{\bullet}="avd" \ar@{};
 "aa" ;"avc" \ar@{-};"aa" ;"avc" \ar@{-};
 "ab" ;"av1" \ar@{-}  ;
  "ad" ;"avb" \ar@{-} \ar@{};
 "ave";"av1" \ar@{=} ;"ave";"av1" \ar@{=} ;
 "avf";"avc" \ar@{=} ;
 "avc" ;"av3" \ar@{=} ;
 "avd" ;"av3" \ar@{=} ;
 "av3" ;"av4" \ar@{=} \ar@{};
"avb";"av3" \ar@{-} ;
 "avb";"av3" \ar@{-} ;
 "avc" ;"av3" \ar@{-} ;
 "av3" ;"av4" \ar@{-} ;
 \endxy
\right) = (1,2),$ &  $M_0\left( \xy  0;/r.15pc/:
  (-10,14)*{\txt{\\\\}}="ba";
  (-2,14)*=0{}="bb";
  (2,14)*=0{}="bc";
  (10,14)*=0{}="bd";
  (-6,6)*=0{}="bv1";
  (6,6)*=0{\bullet}="bv2";
  (0,-6)*=0{\bullet}="bv3";
  (0,-13)*{}="bv4";
  (4,10)*=0{\bullet}="bva";
  (8,10)*=0{\bullet}="bvb";
  (-4,10)*=0{}="bve";
  (-8,10)*=0{}="bvf";
  (-4,2)*=0{\bullet}="bvc" ;
  (4,2)*=0{}="bvd" \ar@{};
 "ba" ;"bvc" \ar@{-};"ba" ;"bvc" \ar@{-};
  "bc" ;"bva" \ar@{-} ;
 "bd" ;"bvb" \ar@{-} \ar@{};
 "bva";"bv2" \ar@{=} ;"bva";"bv2" \ar@{=} ;
 "bvb";"bv3" \ar@{=} ;
 "bvc" ;"bv3" \ar@{=} ;
 "bv3" ;"bv4" \ar@{=} \ar@{};
 "bva";"bv2" \ar@{-} ;"bva";"bv2" \ar@{-} ;
 "bvb";"bv3" \ar@{-} ;
 "bvc" ;"bv3" \ar@{-} ;
 "bv3" ;"bv4" \ar@{-} ;
 \endxy\right) = (2,1)$ \\
   $M_0\left( \xy  0;/r.15pc/:
  (-10,14)*{\txt{\\\\}}="ca";
  (-2,14)*=0{}="cb";
  (2,14)*=0{}="cc";
  (10,14)*=0{}="cd";
  (-6,6)*=0{\bullet}="cv1";
  (6,6)*=0{}="cv2";
  (0,-6)*=0{\bullet}="cv3";
  (0,-13)*{}="cv4";
  (4,10)*=0{}="cva";
  (8,10)*=0{}="cvb";
  (-4,10)*=0{}="cve";
  (-8,10)*=0{}="cvf";
  (-4,2)*=0{\bullet}="cvc" ;
  (4,2)*=0{\bullet}="cvd" \ar@{};
 "ca" ;"cvc" \ar@{-};"ca" ;"cvc" \ar@{-};
 "cb" ;"cv1" \ar@{-}  ;
  "cd" ;"cvb" \ar@{-} \ar@{};
 "cvc" ;"cv3" \ar@{=} ;
 "cvc" ;"cv3" \ar@{=} ;
 "cvd" ;"cv3" \ar@{=} ;
 "cv3" ;"cv4" \ar@{=} \ar@{};
"cvb";"cv3" \ar@{-} ;
 "cvb";"cv3" \ar@{-} ;
 "cvc" ;"cv3" \ar@{-} ;
 "cv3" ;"cv4" \ar@{-} ;
 \endxy\right) = (0,2)$ &&&   $ M_0\left( \xy  0;/r.15pc/:
  (-10,14)*{\txt{\\\\}}="da";
  (-2,14)*=0{}="db";
  (2,14)*=0{}="dc";
  (10,14)*=0{}="dd";
  (-6,6)*=0{}="dv1";
  (6,6)*=0{\bullet}="dv2";
  (0,-6)*=0{\bullet}="dv3";
  (0,-13)*{}="dv4";
  (4,10)*=0{}="dva";
  (8,10)*=0{}="dvb";
  (-4,10)*=0{}="dve";
  (-8,10)*=0{}="dvf";
  (-4,2)*=0{\bullet}="dvc" ;
  (4,2)*=0{\bullet}="dvd" \ar@{};
 "da" ;"dvc" \ar@{-};"da" ;"dvc" \ar@{-};
  "dc" ;"dva" \ar@{-} ;
 "dd" ;"dvb" \ar@{-} \ar@{};
"dvd" ;"dv3" \ar@{=} ;"dvd" ;"dv3" \ar@{=} ;
 "dvc" ;"dv3" \ar@{=} ;
 "dv3" ;"dv4" \ar@{=} \ar@{};
 "dva";"dv2" \ar@{-} ;"dva";"dv2" \ar@{-} ;
 "dvb";"dv3" \ar@{-} ;
 "dvc" ;"dv3" \ar@{-} ;
 "dv3" ;"dv4" \ar@{-} ;
 \endxy\right) = (2,0)$ \\
 &   $ M_0\left(\xy  0;/r.15pc/:
  (-10,14)*{\txt{\\\\}}="ea";
  (-2,14)*=0{}="eb";
  (2,14)*=0{}="ec";
  (10,14)*=0{}="ed";
  (-6,6)*=0{\bullet}="ev1";
  (6,6)*=0{}="ev2";
  (0,-6)*=0{\bullet}="ev3";
  (0,-14)*{}="ev4";
  (0,-10)*{\bullet}="ev5";
  (4,10)*=0{}="eva";
  (8,10)*=0{}="evb";
  (-4,10)*=0{}="eve";
  (-8,10)*=0{}="evf";
  (-4,2)*=0{}="evc" ;
  (4,2)*=0{}="evd" \ar@{};
 "ea" ;"evc" \ar@{-};"ea" ;"evc" \ar@{-};
 "eb" ;"ev1" \ar@{-}  ;
  "ed" ;"evb" \ar@{-} \ar@{};
   "ev5" ;"ev4" \ar@{=}; "ev5" ;"ev4" \ar@{=} \ar@{};
"evb";"ev3" \ar@{-} ;
 "evb";"ev3" \ar@{-} ;
 "evc" ;"ev3" \ar@{-} ;
 "ev3" ;"ev4" \ar@{-} ;
 \endxy \right) = (0,0),$ &  $      M_0\left( \xy  0;/r.15pc/:
  (-10,14)*{\txt{\\\\}}="fa";
  (-2,14)*=0{}="fb";
  (2,14)*=0{}="fc";
  (10,14)*=0{}="fd";
  (-6,6)*=0{}="fv1";
  (6,6)*=0{\bullet}="fv2";
  (0,-6)*=0{\bullet}="fv3";
  (0,-14)*{}="fv4";
  (0,-10)*{\bullet}="fv5";
  (4,10)*=0{}="fva";
  (8,10)*=0{}="fvb";
  (-4,10)*=0{}="fve";
  (-8,10)*=0{}="fvf";
  (-4,2)*=0{}="fvc" ;
  (4,2)*=0{}="fvd" \ar@{};
 "fa" ;"fvc" \ar@{-};"fa" ;"fvc" \ar@{-};
  "fc" ;"fva" \ar@{-} ;
 "fd" ;"fvb" \ar@{-} \ar@{};
  "fv5" ;"fv4" \ar@{=}; "fv5" ;"fv4" \ar@{=} \ar@{};
 "fva";"fv2" \ar@{-} ;"fva";"fv2" \ar@{-} ;
 "fvb";"fv3" \ar@{-} ;
 "fvc" ;"fv3" \ar@{-} ;
 "fv3" ;"fv4" \ar@{-} ;
 \endxy\right) = (0,0)$ \\
\end{tabular}
\end{small}

$$$$
Note that the bottom two points are both the origin.
 The convex
hull of the five total distinct points appears as follows:
\begin{small}
$$
\xy 0;/r3pc/: (0,0);(0,3) \ar@{..>} ; (0,0);(0,3) \ar@{..>} ; (0,0); (3,0) \ar@{-} ;
  (1,2);(2,1) \ar@{-};(1,2)*=0{\bullet};(2,1)*=0{\bullet} \ar@{-};
  (1,2);(0,2)*=0{\bullet}\ar@{-};
  (0,2);(0,0)*=0{\bullet} \ar@{-};
  (0,0);(0,0)*=0{\bullet} \ar@{-};
  (0,0);(2,0)*=0{\bullet} \ar@{-};
  (2,0);(2,1)*=0{\bullet} \ar@{-};
  \endxy
$$
\end{small}
The (redundant) list of vertices for $\cal{CK}(4)$ based on painted binary trees with 4 leaves is:
\newline

\begin{tabular}{lllll}
 (1, 2 ,3)&
 (0 ,2 ,3)&
 (0 ,0 ,3)&
 (0, 0 ,0)\\
 (2, 1, 3)&
 (2 ,0 ,3)&
 (0 ,0 ,3)&
 (0, 0 ,0)\\
 (3 ,1 ,2)&
 (3, 0, 2)&
 (3 ,0 ,0)&
 (0, 0 ,0)\\
 (3, 2, 1)&
 (3 ,2, 0)&
 (3 ,0, 0)&
 (0 ,0 ,0)\\
 (1 ,4 ,1)&
 (0, 4, 1)&
 (1, 4, 0)&
 (0, 4, 0)&
 (0, 0 ,0)\\
\end{tabular}
\newline
These are suggestively listed as a table where the first column is made up of the coordinates
calculated by Loday for $\cal{K}(4)$, which here correspond to trees with every trivalent node
entirely painted. The rows may be found by applying the factor $0$ to each coordinate in turn, in
order of increasing size of those coordinates. Here is the convex hull of the fifteen total distinct points, where we
see that each row of the table corresponds to shortest paths from the big pentagon to the origin.
 Of course sometimes there are multiple such paths.
\newline

\begin{small}
$$
\xy 0;/r1pc/:
(0,1);(0,16) \ar@{.>} ; (0,1);(0,16) \ar@{.>} ; (0,1);(-15,-11.25)\ar@{.>}; (0,1); (22,1) \ar@{-} ;
  (0,1)*=0{\bullet}="1a";
  (0,1)*=0{\bullet}="1b";
  (0,1)*=0{\bullet}="1c";
  (0,1)*=0{\bullet}="1d";
  (0,1)*=0{\bullet}="1e";
(-10,-2)*=0{\bullet}="2a";
  (-10,-2)*=0{^{(3,1,2)}\hspace{-.5in}};
  (0,-7)*=0{\bullet}="2b";
  (0,-7)*=0{_{(3,2,1)}\hspace{-.5in}};
  (16,2)*=0{\bullet}="2c";
  (16,2)*=0{^{(1,4,1)}\hspace{.5in}};
  (5,10)*=0{\bullet}="2d";
  (5,10)*=0{^{(1,2,3)}\hspace{-.5in}};
  (-6,6)*=0{\bullet}="2e";
  (-6,6)*=0{_{_{(2,1,3)}}\hspace{-.5in}};
  (-14,-2)*=0{\bullet}="3a";
  (-14,-10.5)*=0{\bullet}="3b";
  (-14,-10.5)*=0{\bullet}="3c";
  (0,-10.5)*=0{\bullet}="3d";
  (16,-1.418)*=0{\bullet}="3e";
  (18.418,1)*=0{\bullet}="3f";
  (18.418,4.418)*=0{\bullet}="3g";
  (7.418,12.418)*=0{\bullet}="3h";
  (0,12.418)*=0{\bullet}="3i";
  (0,12.418)*=0{\bullet}="3j";
  (-10,6)*=0{\bullet}="3k";
 (14.222,1)*{\hole}="x"; (14.222,1)*{\hole}="x";
(16,1)*{\hole}="y";
  (0,8.2)*{\hole}="z";
 (0,8.2)*{\hole}="w";
 (-6,-4)*{\hole}="p";
  (-6,-4)*{\hole}="q";
  "1a";"p" \ar@{-}; "1a";"p"\ar@{-};
  "p";"3b"  \ar@{-};
    "1b";"q" \ar@{-};
  "q";"3c" \ar@{-};
 "1c";"x" \ar@{-};
 "x";"y" \ar@{-};
 "y";"3f" \ar@{-};
 "1d";"w" \ar@{-};
 "w";"3i" \ar@{-};
 "1e";"z" \ar@{-};
 "z";"3j" \ar@{-};
 "2a";"3a" \ar@{-};
 "2b";"3d" \ar@{-};
  "2c";"3e" \ar@{-};
  "2c";"3g" \ar@{-};
 "2d";"3h" \ar@{-};
 "2e";"3k" \ar@{-};
"1a";"1b" \ar@{-}; "1a";"1b" \ar@{-};
 "1b";"1c" \ar@{-};
 "1c";"1d" \ar@{-};
 "1d";"1e" \ar@{-};
 "1e";"1a" \ar@{-};
 "2a";"2b" \ar@{-};
 "2b";"2c" \ar@{-};
 "2c";"2d" \ar@{-};
 "2d";"2e" \ar@{-};
 "2e";"2a" \ar@{-};
 "3a";"3b" \ar@{-};
  "3b";"3c" \ar@{-};
  "3c";"3d" \ar@{-};
  "3d";"3e" \ar@{-};
  "3e";"3f" \ar@{-};
  "3f";"3g" \ar@{-};
  "3g";"3h" \ar@{-};
  "3h";"3i" \ar@{-};
  "3i";"3j" \ar@{-};
  "3j";"3k" \ar@{-};
  "3k";"3a" \ar@{-};
     \endxy
$$
\end{small}
\newline

To see the picture of $\cal{CK}(4)$ that is in Figure 1 of this paper, just rotate this view of the convex hull
by 90 degrees clockwise.
To compare to other pictures of $\cal{CK}(4)$ in this paper note the
 smallest quadrilateral facet in this picture containing the unique vertex with four incident edges.

To see a rotatable version of the convex hull which is the fourth composihedron, enter the
following homogeneous coordinates into the Web Demo of polymake (with option visual), at
\url{http://www.math.tu-berlin.de/polymake/index.html#apps/polytope}. Indeed polymake was instrumental in
the experimental phase of this research \cite{poly}.

\begin{align*}
POINTS\\
 1 ~1~ 2~ 3\\
 1~ 0~ 2 ~3\\
 1 ~0~ 0~ 3\\
 1 ~0 ~0~ 0\\
 1~ 2~ 1~ 3\\
 1 ~2~ 0 ~3\\
 1~ 3~ 1~ 2\\
 1~ 3~ 0~ 2\\
 1~ 3~ 0 ~0\\
 1~ 3~ 2~ 1\\
 1~ 3~ 2~ 0\\
 1~ 1~ 4~ 1\\
 1~ 0~ 4 ~1\\
 1~ 1~ 4~ 0\\
 1~ 0 ~4 ~0\\
\end{align*}

\begin{remark}
It is also fairly simple to devise a mapping from $n$-leaved painted binary trees to Euclidean space which
reflects the quotient of the multiplihedron by range equivalence, where $f(a)(f(b)f(c)) = (f(a)f(b))f(c).$
It may be done by reflecting such equivalences as:
$$ 
\xy  0;/r.15pc/:
(-10,14)*{\txt{\\\\}}="aa";
  (-2,14)*{}="ab";
  (2,14)*{}="ac";
  (10,14)*{}="ad";
  (-6,6)*=0{\bullet}="av1";
  (6,6)*=0{}="av2";
  (0,-6)*=0{\bullet}="av3";
  (0,-13)*=0{}="av4";
  (4,10)*=0{}="ava";
  (8,10)*=0{}="avb";
  (-4,10)*=0{\bullet}="ave";
  (-8,10)*=0{\bullet}="avf";
  (-4,2)*=0{}="avc" ;
  (4,2)*=0{\bullet}="avd" \ar@{};
 "aa" ;"avc" \ar@{-};"aa" ;"avc" \ar@{-};
 "ab" ;"av1" \ar@{-}  ;
  "ad" ;"avb" \ar@{-} \ar@{};
 "ave";"av1" \ar@{=} ;"ave";"av1" \ar@{=} ;
 "avf";"avc" \ar@{=} ;
 "avc" ;"av3" \ar@{=} ;
 "avd" ;"av3" \ar@{=} ;
 "av3" ;"av4" \ar@{=} \ar@{};
"avb";"av3" \ar@{-} ;
 "avb";"av3" \ar@{-} ;
 "avc" ;"av3" \ar@{-} ;
 "av3" ;"av4" \ar@{-} ;
 \endxy
\sim \xy  0;/r.15pc/:
  (-10,14)*{\txt{\\\\}}="ba";
  (-2,14)*=0{}="bb";
  (2,14)*=0{}="bc";
  (10,14)*=0{}="bd";
  (-6,6)*=0{}="bv1";
  (6,6)*=0{\bullet}="bv2";
  (0,-6)*=0{\bullet}="bv3";
  (0,-13)*{}="bv4";
  (4,10)*=0{\bullet}="bva";
  (8,10)*=0{\bullet}="bvb";
  (-4,10)*=0{}="bve";
  (-8,10)*=0{}="bvf";
  (-4,2)*=0{\bullet}="bvc" ;
  (4,2)*=0{}="bvd" \ar@{};
 "ba" ;"bvc" \ar@{-};"ba" ;"bvc" \ar@{-};
  "bc" ;"bva" \ar@{-} ;
 "bd" ;"bvb" \ar@{-} \ar@{};
 "bva";"bv2" \ar@{=} ;"bva";"bv2" \ar@{=} ;
 "bvb";"bv3" \ar@{=} ;
 "bvc" ;"bv3" \ar@{=} ;
 "bv3" ;"bv4" \ar@{=} \ar@{};
 "bva";"bv2" \ar@{-} ;"bva";"bv2" \ar@{-} ;
 "bvb";"bv3" \ar@{-} ;
 "bvc" ;"bv3" \ar@{-} ;
 "bv3" ;"bv4" \ar@{-} ;
 \endxy
$$
$\dots$ by mapping them both to a single vertex in $\mathbb{R}^2.$ One possible map is
$$M'(t) = (x_1, \dots x_{n-1}) \text{ where } x_i = \begin{cases}
    ql_ir_i,& \text{if node $i$ is type (1)} \\
    i(n-i),& \text{if node $i$ is type (2).}
\end{cases} $$

This will yield
a new realization of $\cal{K}(n+1)$ in $\mathbb{R}^{(n-1)},$ with vertices corresponding to range equivalence
classes of $n$-leaved painted trees. Here is
the (redundant) list of vertices for $\cal{K}(5)$ based on painted binary trees with 4 leaves:
The list of vertices for ${\cal J}(4)$ based on painted binary trees with 4 leaves, for $q= {1
\over 2},$ is:
\newline

\begin{tabular}{lllll}
 (3, 4 ,3)&
 (1/2 ,4 ,3)&
 (1/2 ,2/2 ,3)&
 (1/2, 2/2 ,3/2)\\
 (3, 4, 3)&
 (3 ,1/2 ,3)&
 (2/2 ,1/2 ,3)&
 (2/2, 1/2 ,3/2)\\
 (3 ,4 ,3)&
 (3, 1/2, 3)&
 (3 ,1/2 ,2/2)&
 (3/2, 1/2 ,2/2)\\
 (3, 4, 3)&
 (3 ,4, 1/2)&
 (3 ,2/2, 1/2)&
 (3/2 ,2/2 ,1/2)\\
 (3 ,4 ,3)&
 (1/2, 4, 3)&
 (3, 4, 1/2)&
 (1/2, 4, 1/2)&
 (1/2, 4/2 ,1/2)\\
\end{tabular}
\newline
These are suggestively listed as a table where the first column is the single image of
all the trees with every trivalent node
entirely painted. Indeed there are 14 total different vertices. Here is the convex hull of those vertices.
\begin{small}
$$
\xy 0;/r1pc/:
   (0,0);(0,16) \ar@{.>} ; (0,0);(0,16) \ar@{.>} ; (0,0);(-15,-11.25)\ar@{.>}; (0,0); (22,0) \ar@{-} ;
      (-4,0)*=0{\bullet}="1a";
  (0,-2)*=0{\bullet}="1b";
  (6,2)*=0{\bullet}="1c";
  (2,4.418)*=0{\bullet}="1d";
  (-3,2.7)*=0{\bullet}="1e";
%
  %
  (-14,-8)*=0{\bullet}="3b";
  (-10,-10)*=0{\bullet}="3c";
  (18.418,2)*=0{\bullet}="3f";
  (2,12.418)*=0{\bullet}="3i";
  (-3,11)*=0{\bullet}="3j";
  (-14,2)*=0{\bullet}="4a";
  (3,-10)*=0{\bullet}="4b";
  (18.418,12.418)*=0{\bullet}="4c";
  (3,2)*=0{\bullet}="4d";
  (3,2.5)*=0{_{_{(3,4,3)}}\hspace{.5in}};
%
  "1a";"3b"  \ar@{-};"1a";"3b"  \ar@{-};
    "1b";"3c" \ar@{-};
 "1c";"3f" \ar@{-};
 "1d";"3i" \ar@{-};
 "1e";"3j" \ar@{-};
 %
 %
"1a";"1b" \ar@{-}; "1a";"1b" \ar@{-};
 "1b";"1c" \ar@{-};
 "1c";"1d" \ar@{-};
 "1d";"1e" \ar@{-};
 "1e";"1a" \ar@{-};
 %
 %
 "4a";"3b" \ar@{-};
  "3b";"3c" \ar@{-};
  "3c";"4b" \ar@{-};
  "4b";"3f" \ar@{-};
  "3f";"4c" \ar@{-};
  "4c";"3i" \ar@{-};
  "3i";"3j" \ar@{-};
  "3j";"4a" \ar@{-};
"4a";"4d" \ar@{-};
"4b";"4d" \ar@{-};
"4c";"4d" \ar@{-};
     \endxy
$$
\end{small}
\end{remark}

\section{Proof of Theorem~\ref{main}}\label{proofsec}
Our algorithm for generating extremal points whose convex hull gives the composihedra can be described as the
 $q\to 0$ limit of the algorithm for the multiplihedra, which was given in \cite{multi}.
 What we need to demonstrate is that
 the limiting process does indeed deliver the
 combinatorial equivalent of the CW-complex $\cal{CK}(n).$

To demonstrate that our convex hulls are each combinatorially equivalent to the
  corresponding  $CW$-complexes of Definition~\ref{recur} we need only check that they both
   have the same vertex-facet incidence. We will show that for
 each $n$
 there is an isomorphism $f$ between the  vertex sets (0-cells) of our convex hull and $\cal{CK}(n)$
 which preserves the sets of vertices
  corresponding to facets; i.e. if $S$ is the set of vertices of a facet of our convex hull then
   $f(S)$ is a vertex set of
  a facet of $\cal{CK}(n).$

  To demonstrate the existence of the isomorphism, noting that the vertices of $\cal{CK}(n)$
  correspond to the domain equivalence classes of binary painted trees, we only
 need to check that the points we calculate from those classes are indeed the vertices of their convex hull.
 Recall that the calculation of a point from a class is well defined by Lemma~\ref{zero}.
 The isomorphism  $f$  is the one that takes the vertex calculated from a certain class to the 0-cell
 associated to the same class. Now a given facet of $\cal{CK}(n)$ corresponds to a tree $T$ which is one of the
 two sorts of trees pictured in Definition~\ref{recur}. To show that our implied isomorphism of
 vertices preserves vertex sets of
facets we need to show that that our facet is the convex hull of the points corresponding to the
classes of binary trees represented by \emph{refinements} of $T$. By refinement of painted trees we refer to the
relationship: $t$ refines $t'$ if $t'$ results from the collapse of some of the internal edges of
$t$.

The proofs of both key points will proceed in tandem, and will be inductive.
That is, we will show that for each facet tree $T$, the points $M_0(t)$ for $t<T$ are precisely those
points that lie in a a bounding hyperplane of our convex hull. Then we will check that  those
points $M_0(t)$ are the extremal points of their convex hull, which is indeed an $(n-2)$-dimensional
polytope of the type decreed in Definition~\ref{recur}.

 We will use the fact that if $P(q)$ and $R(q)$
are two polytopes with some vertices parameterized continuously by $q$ that

$$\lim_{q\to a}(P\times Q) \equiv \lim_{q\to a}P \times \lim_{q\to a}Q$$

\begin{definition}
The lower facets $\cal{CK}_k(n-1,2)$ correspond to minimal lower trees such as:
 \begin{small}
 $$
\xy (0,0) *{l(k,2) =}\endxy
\xy (0,0) *{ \xy  0;/r.45pc/:
    (-8,22) *{{2 \atop \overbrace{~~~~~~~~~}}};
   (-22,20)*{{}^0}="L";(6,20)*{{}^{n-1}}="R";
  (-14,20)*=0{}="a"; (-10,20)*{{}^{k-1}}="b";
  (-6,20)*=0{}="b2";
  (-2,20)*=0{}="c";
  (-8,14)*=0{\bullet}="v1";
  (-8,8)*=0{}="v3";
  (-8,16)*=0{\bullet}="v0";
  (-15,18) *{\dots};
  (-1,18) *{\dots};
  (-8,19) *{\dots};
   "a" ;"v1" \ar@{-};"a" ;"v1" \ar@{-};
   "L" ;"v1" \ar@{-};
   "R" ;"v1" \ar@{-};
 "b" ; "v0" \ar@{-};
 "b2" ; "v0" \ar@{-};
 "v0"; "v1" \ar@{-};
 "v1" ; "c" \ar@{-};
  "v1" ;"v3" \ar@{-};
  \ar@{} ;
"v1" ;"v3" \ar@{=};"v1" ;"v3" \ar@{=};
 \endxy} \endxy
$$
\end{small}

These are assigned a hyperplane $H_0(l(k,2))$ determined by the equation
$$x_k
= 0.$$

\end{definition}
 Recall that $n-1$ is the number of branches extending from the lowest node. Thus $1\le k \le n-1.$
 Notice that if $q$ were not zero as in definition 5.1 of \cite{multi} then the the equation would appear:
$$x_k = q.$$ We conjecture that these latter hyperplanes would function as replacements for the ones given by $x_k=0$
in that after the replacement the resulting polytope would still be the composihedron.

\begin{definition}
The upper facets $\cal{CK}(t;r_1,\dots ,r_t)$ correspond to upper trees such as:
 \begin{small}
 $$
\xy (0,0) *{ u(t;r_1,\dots, r_t)= }\endxy
\xy (0,0) *{ \xy  0;/r.45pc/:
  (-.5,23)*{{}^0}="a"; (1.5,25.25) *{{r_1 \atop \overbrace{~~~}}}; (1.5,23) *{\dots}; (3.5,23)*=0{}="b";
(4.5,23)*=0{}="a2"; (6.5,25.25) *{{r_2 \atop \overbrace{~~~}}}; (6.5,23)  *{\dots};
(8.5,23)*=0{}="b2";
 (12,23)*=0{}="a3"; (14,25.25) *{{r_t \atop \overbrace{~~~}}}; (14,23) *{\dots~~}; (16,23)*{~~^{~n-1}}="b3";
   (2,20)*=0{\bullet}="d";
  (6,20)*=0{\bullet}="e"; (14,20)*=0{\bullet}="f";
 (9,17) *{\dots};
  (8,14)*=0{\bullet}="v2";
  (8,8)*=0{}="v4"; \ar@{};
 "d" ;"v2" \ar@{-}; "d" ;"v2" \ar@{-};
"a" ;"d" \ar@{-};
 "b" ;"d" \ar@{-};
  "a2" ;"e" \ar@{-};
   "b2" ;"e" \ar@{-};
 "a3" ;"f" \ar@{-};
 "b3" ;"f" \ar@{-};
 "e" ;"v2" \ar@{-};
 "f" ;"v2" \ar@{-};
  "v2" ;"v4" \ar@{-};
 \ar@{} ;
"d" ;"v2" \ar@{=};"d" ;"v2" \ar@{=};
 "e" ;"v2" \ar@{=};
 "f" ;"v2" \ar@{=};
 "v2" ;"v4" \ar@{=};
 \endxy} \endxy
$$
\end{small}

These are assigned a hyperplane $H_0(u(t;r_1,\dots, r_t))$ determined by the equation
$$x_{r_1} + x_{(r_1+r_2)} +
\dots + x_{(r_1+r_2+\dots +r_{t-1})} = {1 \over 2}\left(n(n-1)-\sum_{i=1}^t r_i(r_i-1)\right)$$ or
equivalently:
 $$x_{r_1} + x_{(r_1+r_2)} + \dots + x_{(r_1+r_2+\dots +r_{t-1})} = \sum_{1\le i<j \le t}r_ir_j.$$
  \end{definition}

 Note that if $t=n$ (so $r_i = 1$ for all $i$) that
this becomes the hyperplane given by
$$x_1 + \dots +
x_{n-1} = {1 \over 2}n(n-1) = S(n-1).$$ Therefore the points $M_0(t)$ for $t$ a binary tree with
only nodes type (2) and (3) will lie in the hyperplane $H$ by Lemma 2.5 of \cite{loday} (using notation $S(n)$ and $H$ as
in that source).
Also note that these upper hyperplanes are exactly the same as those defined for $\cal{J}(n)$ in definition
5.2 of \cite{multi}.

  In order to prove Theorem~\ref{main} it turns out to be expedient to prove a more general result. This
  consists of an even more flexible version of the algorithm for assigning points to binary trees
  in order to achieve a convex hull of those points which is the composihedron. To assign points
  in {\bf R}$^{n-1}$
  to the domain equivalence classes of binary painted trees with $n$ leaves we choose
  an ordered $n$-tuple of positive integers $w_0, \dots, w_{n-1}.$  Now given a tree $t$
we calculate a point $M^{w_0, \dots, w_{n-1}}_0(t)$ in {\bf R}$^{n-1}$ as follows: we begin by
assigning the weight $w_i$ to the $i^{th}$ leaf. We refer to the result as a weighted tree. Then we
modify Loday's algorithm for finding the coordinate for each trivalent node by replacing the number
of leaves of the left and right subtrees with the sums of the weights of the leaves of those
subtrees. Thus we let $L_i = \sum w_k$ where the sum is over the leaves of the subtree supported by
the left branch of the $i^{th}$ node. Similarly we let $R_i = \sum w_k$ where $k$ ranges over the
leaves of the the subtree supported by the right branch.
  Then
$$M^{w_0, \dots, w_{n-1}}_0(t) = (x_1, \dots x_{n-1}) \text{ where } x_i = \begin{cases}
    0,& \text{if node $i$ is type (1)} \\
    L_iR_i,& \text{if node $i$ is type (2).}
\end{cases} $$
Note that the original points $M_0(t)$ are recovered if $w_i = 1$ for  $i=0,\dots,n-1.$ Thus
proving that the convex hull of the points $M^{w_0, \dots, w_{n-1}}_0(t)$ where $t$ ranges over the
binary painted trees with $n$ leaves is the $n^{th}$ composihedron will imply the main theorem.
For an example,  let us calculate the point in {\bf R}$^3$ which corresponds to the 4-leaved tree:

\begin{small}
 $$ t =
\xy  0;/r.25pc/:
  (-10,20)*{ w_0~}="a"; (-2,20)*{w_1~}="b";
  (2,20)*{~w_2}="c"; (10,20)*{~w_3}="d";
  (-6,12)*=0{\bullet}="v1"; (6,12)*=0{\bullet}="v2";
  (0,0)*=0{\bullet}="v3";
  (0,-7)*=0{}="v4";
  (4,16)*=0{\bullet}="va";
  (8,16)*=0{\bullet}="vb";
  (-4,8)*=0{\bullet}="vc" \ar@{};
 "a" ;"v1" \ar@{-};"a" ;"v1" \ar@{-};
 "v1" ;"vc" \ar@{-};
 "b" ;"v1" \ar@{-}  ;
 "c" ;"va" \ar@{-} ;
 "d" ;"vb" \ar@{-} \ar@{};
 "va";"v2" \ar@{=} ;"va";"v2" \ar@{=} ;
 "vb";"v2" \ar@{=} ;
 "v2" ;"v3" \ar@{=};
 "vc" ;"v3" \ar@{=} ;
 "v3" ;"v4" \ar@{=} \ar@{};
 %
 %
 "va";"v2" \ar@{-} ;"va";"v2" \ar@{-} ;
 "vb";"v3" \ar@{-} ;
 "vc" ;"v3" \ar@{-} ;
 "v3" ;"v4" \ar@{-} ;
 \endxy
$$
\end{small}

Now $M^{w_0, \dots, w_{3}}_0(t) = (0, (w_0+w_1)(w_2+w_3), w_2w_3). $
 To motivate this new weighted version of our algorithm we mention that the weights
 $w_0,\dots,w_{n-1}$ are to be thought of as the sizes of various trees to be grafted to the
 respective leaves. This weighting is therefore necessary to make the induction go through, since
 the induction is itself based upon the grafting of trees.

 Since we are proving that the points $M^{w_0, \dots, w_{n-1}}_0(t)$ are the vertices of the
composihedron, we need to define hyperplanes  $H^{w_0, \dots, w_{n-1}}_0(t)$ for this weighted
version which we will show to be the the bounding hyperplanes when $t$ is a facet tree.
\begin{definition}
Recall that the lower facets $\cal{CK}_k(n-1,2)$ correspond to minimal lower trees.
These are assigned a hyperplane $H^{w_0, \dots, w_{n-1}}_0(l(k,2))$ determined by the equation
$$x_k  =0 .$$
\end{definition}
\begin{lemma}\label{low}
For any painted binary tree $t$ the point $M^{w_0, \dots, w_{n-1}}_0(t)$ lies in the hyperplane
$H^{w_0, \dots, w_{n-1}}_0(l(k,2))$ iff $t$ is domain equivalent to a refinement of $l(k,2).$ Also the hyperplane
$H^{w_0, \dots, w_{n-1}}_0(l(k,2))$ bounds below the points $M^{w_0, \dots, w_{n-1}}_0(t)$ for $t$
any binary representative of a domain class of painted trees.
\end{lemma}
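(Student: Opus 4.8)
The plan is to reduce the entire statement to one structural observation. Because the weights $w_i$ are positive integers we have $L_i\ge 1$ and $R_i\ge 1$ at every node, so $x_i=L_iR_i>0$ whenever node $i$ is of type (2), while $x_i=0$ precisely when node $i$ is of type (1). Hence the coordinate $x_k$ of $M^{w_0,\dots,w_{n-1}}_0(t)$ is nonnegative for every painted binary tree $t$, which is exactly the second assertion: $H^{w_0,\dots,w_{n-1}}_0(l(k,2))=\{x_k=0\}$ is a supporting hyperplane bounding all the points from below. It also reduces the first (iff) assertion to the combinatorial chain
$$ M^{w_0,\dots,w_{n-1}}_0(t)\in\{x_k=0\}\iff \text{node $k$ of $t$ has type (1)}\iff t \text{ is domain equivalent to a refinement of } l(k,2). $$

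First I would record that ``node $k$ is unpainted'' is a domain--equivalence invariant, coinciding with ``leaves $k-1$ and $k$ lie in a common maximal unpainted subtree.'' Indeed, the painted region of a painted tree is connected and contains the root, so if node $k$ (the meet of leaves $k-1,k$) is unpainted then the whole subtree above it is unpainted; conversely, if $k-1,k$ share an unpainted subtree then their meet lies in it. Since a domain equivalence only re--associates within maximal unpainted subtrees and never changes which leaves share such a subtree, the invariance follows. The same bookkeeping shows $M^{w_0,\dots,w_{n-1}}_0$ is itself constant on domain--equivalence classes: the argument of Lemma~\ref{zero} applies verbatim, as re--association leaves each type-(2) node together with its pair $(L_i,R_i)$ untouched.

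For the forward implication I would use that the facet tree $l(k,2)$ is a single type-(6) node $v_1$ joined by its unique internal edge to one unpainted cherry $v_0$ on leaves $k-1,k$. Every binary refinement of $l(k,2)$ therefore has node $k$ equal to that cherry, hence of type (1); by the invariance above, any $t$ domain equivalent to such a refinement also has node $k$ unpainted, i.e. $x_k=0$. For the reverse implication, suppose $x_k=0$, so node $k$ is unpainted and $k-1,k$ are adjacent leaves of a common maximal unpainted subtree $U$. Using that the associativity moves generate all binary bracketings of $U$, I re--associate $U$ --- a domain equivalence --- to a tree $t'$ in which $k-1,k$ form a cherry. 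Collapsing that cherry exhibits $t'$ as the result of grafting an unpainted $2$-leaf tree onto the $k^{th}$ leaf of a binary painted tree $T'$ with $n-1$ leaves; since every such $T'$ refines the $(n-1)$-leaf painted corolla, which is exactly $v_1$, the tree $t'$ refines $l(k,2)$. Thus $t$ is domain equivalent to the refinement $t'$ of $l(k,2)$, as needed.

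The main obstacle is precisely this reverse implication: a general $t$ with node $k$ merely unpainted may have node $k$ combining large subtrees rather than the bare pair $\{k-1,k\}$, and one must see that such $t$ is nonetheless domain equivalent to an honest refinement of $l(k,2)$. This is where the re--association step and the recognition that $v_1$ is a \emph{single} type-(6) node are essential: because $v_1$ is the $(n-1)$-input painted corolla, the coarse facet tree $l(k,2)$ already absorbs the entire $\cal{J}(n-1)$-worth of structure, so \emph{every} binary painted tree on those inputs collapses onto it. Without this one would wrongly expect refinements of $l(k,2)$ to be forced into a pure ``multiply--then--map'' shape. A minor point, routine once $k-1,k$ have been re--associated into an innermost pair, is to confirm that collapsing the cherry yields a genuine binary painted tree $T'$.
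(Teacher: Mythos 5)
Your proposal is correct and takes essentially the same approach as the paper's own proof: both reduce the lemma to the chain of equivalences $x_k=0$ iff node $k$ is unpainted iff $t$ is domain equivalent to a refinement of $l(k,2)$, with positivity of the weights giving the lower bound. The only difference is one of detail, in that you spell out the re-association and cherry-collapsing argument that the paper compresses into the phrase ``by a series of domain equivalence moves.''
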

\begin{proof}
The $k^{th}$ node of any binary tree that is a refinement of $l(k,2)$ will be unpainted, and will have left
and right subtrees with only one leaf each. Thus the $k^{th}$ node of
a binary tree that is domain equivalent to
a refinement of $l(k,2)$ will be unpainted. Furthermore, any binary tree with its $k^{th}$ node unpainted
is domain equivalent to
a refinement of $l(k,2),$ by a series of domain equivalence moves.
(Recall that no branches may be painted above an unpainted node.) Thus
any binary tree $t$ which is domain equivalent to a refinement of the lower tree
$l(k,2)$ will yield a point $M^{w_0, \dots, w_{n-1}}_0(t)$ for which $x_k=0$
since the $k^{th}$ node will be unpainted in any such tree. Also,
 if a binary tree $t$ is not equivalent to a refinement of a lower tree $l(k,2)$ then the
point $M^{w_0, \dots, w_{n-1}}_0(t)$ will have the property that
$x_k  > 0,$ since the $k^{th}$ node will be painted.
\end{proof}
\begin{definition}
\end{definition}
Recall that the upper facets $\cal{CK}(t;r_1,\dots ,r_t)$ correspond to upper trees such as:
 \begin{small}
 $$
u(t;r_1,\dots, r_t) = \xy  0;/r.45pc/:
  (-.5,23)*{{}^0}="a"; (1.5,25.25) *{{r_1 \atop \overbrace{~~~}}}; (1.5,23) *{\dots}; (3.5,23)*=0{}="b";
(4.5,23)*=0{}="a2"; (6.5,25.25) *{{r_2 \atop \overbrace{~~~}}}; (6.5,23)  *{\dots};
(8.5,23)*=0{}="b2";
 (12,23)*=0{}="a3"; (14,25.25) *{{r_t \atop \overbrace{~~~}}}; (14,23) *{\dots~~}; (16,23)*{~~^{~n-1}}="b3";
   (2,20)*=0{\bullet}="d";
  (6,20)*=0{\bullet}="e"; (14,20)*=0{\bullet}="f";
 (9,17) *{\dots};
  (8,14)*=0{\bullet}="v2";
  (8,8)*=0{}="v4"; \ar@{};
 "d" ;"v2" \ar@{-}; "d" ;"v2" \ar@{-};
"a" ;"d" \ar@{-};
 "b" ;"d" \ar@{-};
  "a2" ;"e" \ar@{-};
   "b2" ;"e" \ar@{-};
 "a3" ;"f" \ar@{-};
 "b3" ;"f" \ar@{-};
 "e" ;"v2" \ar@{-};
 "f" ;"v2" \ar@{-};
  "v2" ;"v4" \ar@{-};
 \ar@{} ;
"d" ;"v2" \ar@{=};"d" ;"v2" \ar@{=};
 "e" ;"v2" \ar@{=};
 "f" ;"v2" \ar@{=};
 "v2" ;"v4" \ar@{=};
 \endxy
$$
\end{small}

These are assigned a hyperplane $H^{w_0, \dots, w_{n-1}}_0(u(t;r_1,\dots, r_t))$   determined by
the equation
 $$x_{r_1} + x_{(r_1+r_2)} + \dots + x_{(r_1+r_2+\dots +r_{t-1})} = \sum_{1\le i<j \le t}R_iR_j.$$
where $R_i = \sum_{}w_j$ where the sum is over the leaves of the $i^{th}$ subtree  (from left to
right) with root the type (5) node; the index $j$ goes from $(r_1+r_2+\dots +r_{i-1})$ to
$(r_1+r_2+\dots +r_{i}-1)$ (where $r_0 = 0.$)
  Note that if $t=n$ (so $r_i = 1$ for all $i$) that
this becomes the hyperplane given by
$$x_1 + \dots +
x_{n-1} = \sum_{1\le i<j \le n-1}w_iw_j.$$
\newline
\begin{lemma}\label{up}
For any painted binary tree $t$ the point $M^{w_0, \dots, w_{n-1}}_0(t)$ lies in the hyperplane
$H^{w_0, \dots, w_{n-1}}_0(u(t;r_1,\dots, r_t))$ iff $t$ is
(domain equivalent to)
a refinement of $u(t;r_1,\dots, r_t).$
Also the hyperplane $H^{w_0, \dots, w_{n-1}}_0(u(t;r_1,\dots, r_t))$ bounds above the points
$M^{w_0, \dots, w_{n-1}}_0(t)$ for $t$ any binary painted tree.
\end{lemma}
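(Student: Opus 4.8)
The plan is to reduce both claims to a single weighted leaf-pair count. Since $M^{w_0,\dots,w_{n-1}}_0(t)$ depends only on the domain class of $t$ by Lemma~\ref{zero}, it suffices to argue for an arbitrary binary representative and let the conclusion descend to domain classes. Set $p_0=0$ and $p_j=r_1+\cdots+r_j$, so the leaves split into the contiguous blocks $B_i=\{p_{i-1},\dots,p_i-1\}$ of total weight $R_i$, and the coordinates on the left of the defining equation are precisely $x_{p_1},\dots,x_{p_{t-1}}$, the nodes seated on the block boundaries.

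The key fact I would record is that in a planar binary tree every pair of leaves $\{u,v\}$ with $u<v$ has a unique lowest common ancestor $m$, and for a painted (type (2)) node $m$ one has $x_m=L_mR_m=\sum w_uw_v$ summed over the pairs whose LCA is $m$, while $x_m=0$ when $m$ is unpainted. Summing over the boundary nodes only therefore gives
$$\sum_{j=1}^{t-1}x_{p_j}=\sum_{\{u,v\}} w_uw_v,$$
the sum running over pairs whose LCA is a \emph{painted} boundary node; whereas $\sum_{1\le i<j\le t}R_iR_j=\sum_{\{u,v\}}w_uw_v$ runs over all pairs lying in \emph{different} blocks. Because $\mathrm{LCA}(u,v)=p_j$ forces $u\le p_j-1<p_j\le v$, every pair on the left is a cross-block pair, so the left index set is contained in the right one; as all $w_uw_v>0$ this is exactly the inequality $\sum_j x_{p_j}\le\sum_{i<j}R_iR_j$, proving the hyperplane bounds the points above. (Loday's weighted identity of \cite{loday} is the case $t=n$, where every pair is counted.) The stated ``if'' direction also follows at once: for a refinement of $u(t;r_1,\dots,r_t)$ the two index sets coincide, so $M_0(t)$ lies on the hyperplane.

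Equality holds exactly when the two index sets agree, i.e. when \emph{every} cross-block pair has its LCA painted and at a boundary position, and the remaining work — which I expect to be the main obstacle — is to translate this into ``$t$ is domain equivalent to a refinement of $u(t;r_1,\dots,r_t)$.'' I would split it in two. The painted-LCA requirement says no maximal unpainted subtree (a domain-multiplication region above a single mapping node) may contain leaves of two blocks, since their LCA would then be unpainted; equivalently every merging of distinct blocks occurs inside the painted region. The boundary-LCA requirement says each block is assembled into a single subtree before being combined with any other: if some block failed to be a contiguous subtree, then the left subtree at a boundary node $p_i$ would start strictly after $p_{i-1}$, and pairing an omitted block-$i$ leaf with leaf $p_i$ would produce a cross-block pair whose LCA sits strictly between $p_{i-1}$ and $p_i$, violating equality. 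Granting both, the $t-1$ painted boundary nodes are precisely the internal nodes of a range-product binary tree on the $t$ blocks viewed as weighted super-leaves (a copy of $\cal{K}(t)$), and the structure inside each block is an arbitrary painted binary tree on $r_i$ leaves (a copy of $\cal{CK}(r_i)$); this is exactly a refinement of $u$ up to domain equivalence. The delicate points to nail down are that contiguity of blocks in a planar tree makes ``boundary-LCA for all cross-block pairs'' equivalent to ``each block is a subtree,'' and that permitting arbitrary, possibly painted, within-block structure never places a node at a boundary position — both resting on the observation that within-block nodes occupy positions strictly between consecutive $p_j$.
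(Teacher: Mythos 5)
Your pair-counting strategy is sound and genuinely different from the paper's own proof: the paper does not re-derive anything, it simply observes that these hyperplanes coincide with the hyperplanes $H^{w_0,\dots,w_{n-1}}_q(u(t;r_1,\dots,r_t))$ of Definition 5.7 of \cite{multi} and imports the proof of Lemma 5.8 of \cite{multi} verbatim at $q=0$. Within your self-contained approach, the identity $x_m=L_mR_m=\sum w_uw_v$ over pairs whose LCA is node $m$, the observation that every pair with LCA at a boundary position $p_j$ is a cross-block pair, the resulting upper bound, and the direction ``refinement $\Rightarrow$ equality'' are all correct and complete.

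The gap is in the direction ``equality $\Rightarrow$ (domain equivalent to) a refinement,'' specifically in your claim that if some block fails to be a subtree then the left subtree at the boundary node $p_i$ starts strictly after $p_{i-1}$. That dichotomy is false. Take $n=6$, the upper tree $u(3;2,2,2)$ (blocks $\{0,1\},\{2,3\},\{4,5\}$, boundary positions $2$ and $4$), and the fully painted binary tree $(((01)2)3)(45)$. Block $\{2,3\}$ is not a subtree, yet the left subtree of the node at position $2$ is $\{0,1\}$ (starting at $0=p_0$) and the left subtree of the node at position $4$ is $\{0,1,2,3\}$ (starting at $0<p_1$); neither boundary node's left subtree starts strictly after the preceding boundary, so your recipe produces no violating pair, even though equality genuinely fails for this tree: the cross-block pairs $\{0,3\}$ and $\{1,3\}$ have their LCA at position $3$, not a boundary position, and indeed the two sides of the hyperplane equation differ by exactly $(w_0+w_1)w_3$. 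The repair is to run the argument at the LCA $A_i$ of the leaves of block $i$ rather than at the boundary node: if the interval $[a,b]$ of leaves below $A_i$ properly contains block $i$, then, since block-$i$ leaves lie in both branches of $A_i$, the position $m$ of $A_i$ satisfies $p_{i-1}<m\le p_i-1$, hence is not a boundary position; and in the case $b\ge p_i$ the pair $\{p_{i-1},p_i\}$, in the case $a\le p_{i-1}-1$ the pair $\{p_{i-1}-1,p_i-1\}$, is a cross-block pair separated by the two branches of $A_i$, so its LCA is $A_i$ itself, contradicting equality. With that fixed, contracting the block subtrees to super-leaves shows the remaining nodes sit exactly at the (painted) boundary positions, so the tree is literally a binary refinement of $u(t;r_1,\dots,r_t)$, and the descent to domain classes via Lemma~\ref{zero} works as you indicated.
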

\begin{proof}
These hyperplanes are precisely those defined as $H^{w_0, \dots, w_{n-1}}_q(u(t;r_1,\dots, r_t))$
in Definition 5.7 of \cite{multi}.
   Thus the proof of Lemma 5.8 of \cite{multi}
holds here as well. Letting $q=0$ in the proof of that
 lemma does not change the argument, since in neither definition does the value of $q$ actually appear.
 In fact the only use of $q$ in the proof relies on the fact that $q<1,$ and when $q=0$ this is certainly also
true.
 \end{proof}

\begin{proof} of Theorem~\ref{main}:
Now we may proceed with our inductive argument. The base case of $n = 2$ leaves is trivial to
check. The points in {\bf R}$^{1}$ are $w_0w_1$ and $0$ Their convex hull is a line segment,
combinatorially equivalent to $\cal{CK}(2).$ Now we assume that for all $i<n$
 and for positive integer weights $w_0,\dots, w_{i-1},$ that the convex hull of the
points $\{M^{w_0, \dots, w_{i-1}}_0(t)~|~ t \text{ is a painted binary tree with $i$ leaves}\}$ in
{\bf R}$^{i-1}$ is combinatorially equivalent to the complex $\cal{CK}(i),$ and that the points
$M^{w_0, \dots, w_{i-1}}_0(t)$ are the vertices of the convex hull. Now for $i=n$ we need to show
that the equivalence still holds. Recall that the two items we plan to demonstrate are that the
points $M^{w_0, \dots, w_{n-1}}_0(t)$ are the vertices of their convex hull and that the facet of
the convex hull corresponding to a given lower or upper tree $T$ is the convex hull of just the
points corresponding to the binary trees that are refinements of $T.$ The first item will be seen
in the process of checking the second.

Given an $n$-leaved lower tree $l(k,2)$ we have from Lemma~\ref{low} that the points corresponding
to binary refinements of $l(k,2)$ lie in an $n-2$ dimensional hyperplane $H^{w_0, \dots,
w_{n-1}}_0(l(k,2))$ which bounds the entire convex hull. To see that this hyperplane does indeed
contain a facet of the entire convex hull we use the induction hypothesis to show that the
dimension of the convex hull of just the points in $H^{w_0, \dots, w_{n-1}}_0(l(k,2))$ is $n-2.$
For $q \in (0,1)$ we know  from  \cite{multi} (proof of Theorem 3.1) that the points
 $M^{w_0, \dots, w_{n-1}}_q(t)$ corresponding to the
binary trees $t < l(k,2)$ are the
vertices of a polytope combinatorially equivalent to ${\cal J}(n-1) \times {\cal K}(2) = {\cal J}(n-1).$
Taking the limit as $q\to 0$ piecewise allows us to use the induction hypothesis to show that
the points
 $M^{w_0, \dots, w_{n-1}}_0(t)$ corresponding to the
binary trees $t < l(k,2)$ are the
vertices of a polytope combinatorially equivalent to $\cal{CK}(n-1),$ which is the required type of polytope.

Given an $n$-leaved upper tree $u(t,r_1,\dots,r_t)$ we have from Lemma~\ref{up} that the points
corresponding to binary refinements of $u(t,r_1,\dots,r_t)$ lie in an $n-2$ dimensional hyperplane
$H^{w_0, \dots, w_{n-1}}_0(u(t,r_1,\dots,r_t))$ which bounds the entire convex hull. To see that
this hyperplane does indeed contain a facet of the entire convex hull we use the induction
hypothesis to show that the dimension of the convex hull of just the points in
 $H^{w_0,\dots,w_{n-1}}_0(u(t,r_1,\dots,r_t))$ is $n-2.$

 For $q \in (0,1)$ we know  from  \cite{multi} (proof of Theorem 3.1) that the points
 $M^{w_0, \dots, w_{n-1}}_q(t)$ corresponding to the
binary trees $t < u(t,r_1,\dots,r_t)$ are the
vertices of a polytope combinatorially equivalent to
${\cal K}(t) \times {\cal J}(r_1) \times \dots \times {\cal J}(r_t).$
Here the vertices of the associahedron do not depend on the factor $q.$
Taking the limit as $q\to 0$ piecewise allows us to use the induction hypothesis to show that
the points
 $M^{w_0, \dots, w_{n-1}}_0(t)$ corresponding to the
binary trees $t < u(t,r_1,\dots,r_t)$ are the
vertices of a polytope combinatorially equivalent to
${\cal K}(t) \times \cal{CK}(r_1) \times \dots \times \cal{CK}(r_t).$ This is the required type of polytope.

Since  each $n$-leaved binary painted tree is a refinement of some upper and or or lower trees,
then the point associated to that tree is found as a vertex of some of the facets of the entire
convex hull, and thus is a vertex of the convex hull. This completes the proof.

\end{proof}



\end{document}